\def\IC{{\mathbb C}}
\def\IE{{\mathbb E}}
\def\IP{{\mathbb P}}
\def\IR{{\mathbb R}}
\def\IN{{\mathbb N}}
\def\IZ{{\mathbb Z}}
\def\n{\noindent}
\def\dsl{\textstyle\sum\limits}
\def\dis{\displaystyle}
\def\o{\omega}
\def\fr{\mbox{\footnotesize $\dis\frac{1}{2}$}}
\def\frvier{\mbox{\footnotesize $\dis\frac{1}{4}$}}
\def\ov{\overline}
\def\ve{\varepsilon}
\def\f{\footnotesize}
\def\r{\rightarrow}
\def\point{{\mbox{\large $.$}}}
\def\wh{\widehat}
\def\wt{\widetilde}
\def\cA{{\cal A}}
\def\cB{{\cal B}}
\def\cC{{\cal C}}
\def\cD{{\cal D}}
\def\cE{{\cal E}}
\def\cT{{\cal T}}
\def\cE{{\cal E}}
\def\cI{{\cal I}}
\def\cK{{\cal K}}
\def\cH{{\cal H}}
\def\cM{{\cal M}}
\def\cP{{\cal P}}
\def\cF{{\cal F}}
\def\cS{{\cal S}}
\def\cG{{\cal G}}
\def\cV{{\cal V}}
\def\cW{{\cal W}}
\def\cY{{\cal Y}}
\def\cZ{{\cal Z}}
\newtheorem{theorem}{Theorem}[section]
\newtheorem{lemma}[theorem]{Lemma}
\newtheorem{remark}[theorem]{Remark}
\begin{document}

\baselineskip14pt
\noindent

\begin{center}
{\bf A LOWER BOUND ON THE CRITICAL PARAMETER OF INTERLACEMENT PERCOLATION IN HIGH DIMENSION}
\end{center}

\vspace{0.5cm}

\begin{center}
Alain-Sol Sznitman
\end{center}


\bigskip
\begin{abstract}
We investigate the percolative properties of the vacant set left by random interlacements on $\IZ^d$, when $d$ is large. A non-negative parameter $u$ controls the density of random interlacements on $\IZ^d$. It is known from \cite{Szni07a}, \cite{SidoSzni09a}, that there is a non-degenerate critical value $u_*$, such that the vacant set at level $u$ percolates when $u < u_*$, and does not percolate when $u > u_*$. Little is known about $u_*$, however, random interlacements on $\IZ^d$, for large $d$, ought to exhibit similarities to random interlacements on a $(2d)$-regular tree, where the corresponding critical parameter can be explicitly computed, see \cite{Teix08b}. We show in this article that $\liminf_d u_* / \log d \ge 1$. This lower bound is in agreement with the above mentioned heuristics.
\end{abstract}

\vspace{7cm}

\noindent
Departement Mathematik  
\\
ETH-Zentrum\\
CH-8092 Z\"urich\\
Switzerland

\newpage

\thispagestyle{empty}
~

\newpage
\setcounter{page}{1}

\setcounter{section}{-1}
\section{Introduction}

Informally, random interlacements describe the microscopic picture left by random walks on large recurrent graphs, which are locally transient, at long times appropriately tuned so that the local picture is non-degenerate, cf.~\cite{Szni09a}, \cite{Wind09}, \cite{Wind08}. 
They are for instance helpful in understanding how random walks can create large separating interfaces, cf. \cite{Szni08b}, \cite{Szni09b}, \cite{CernTeixWind09}. In the case where the local transient model is $\IZ^d$, $d \ge 3$, the interlacement at level $u \ge 0$ is a random subset of $\IZ^d$, which is connected, ergodic under translations, and infinite when $u$ is positive, cf.~\cite{Szni07a}. The density of this set monotonically goes from $0$ to $1$ as $u$ goes from $0$ to $\infty$. Its complement, the vacant set at level $u$, has non-trivial percolative properties. In particular there exists a non-degenerate critical value  $u_*$ in $(0,\infty)$, such that for $u < u_*$, the vacant set at level $u$ has an infinite connected component, and for $u > u_*$, all its connected component are finite, cf.~\cite{Szni07a}, \cite{SidoSzni09a}. Little is known about $u_*$ and its behavior for large $d$. A heuristic paradigm lurking in the background of the present work is that for large $d$, random interlacements on $\IZ^d$ ought to exhibit similarities to random interlacements on $2d$-regular trees, for which an explicit expression of the critical value is available, cf.~\cite{Teix08b}. The main result of this work gives an asymptotic lower bound on $u_*$, which is in agreement with this paradigm.

\medskip
Before discussing our results any further, we first describe the model. We refer to Section 1 for precise definitions. Random interlacements consist of a cloud of paths constituting a Poisson point process on the space of doubly infinite $\IZ^d$-valued trajectories modulo time-shift, tending to infinity at positive and negative infinite times. A non-negative parameter $u$ plays in essence the role of a multiplicative factor of the intensity measure of this Poisson point process. One constructs on a suitable probability space $(\Omega, \cA, \IP)$, see (\ref{1.29}) - (\ref{1.32}), the whole family $\cI^u$, $u \ge 0$, of random interlacements at level $u$, see (\ref{1.36}). These random sets are the traces on $\IZ^d$ of the cloud of trajectories modulo time-shift, with ``labels up to $u$''. The complement $\cV^u$ of $\cI^u$ in $\IZ^d$ is the vacant set at level $u$. The law $Q_u$ on $\{0,1\}^{\IZ^d}$ of the indicator function of $\cV^u$ is characterized by the following identity, cf.~(2.16) of \cite{Szni07a}:
\begin{equation}\label{0.1}
Q_u [Y_x = 1, \;\mbox{for all} \; x \in K] = \exp\{ - u \,{\rm cap}(K)\}, \;\mbox{for all finite} \;K \subset \IZ^d\,,
\end{equation}

\n
where $Y_x$, $x \in \IZ^d$, stand for the canonical coordinates on $\{0,1\}^{\IZ^d}$, and cap$(K)$ for the capacity of $K$, see (\ref{1.25}).

\medskip
It is known from Theorem 3.5 of \cite{Szni07a} and Theorem 3.4 of \cite{SidoSzni09a}, that there is a critical value $u_*$ in $(0,\infty)$ such that:
\begin{equation}\label{0.2}
\begin{array}{rl}
{\rm i)} & \mbox{for $u > u_*$, $\IP$-a.s., all connected components of $\cV^u$ are finite,}
\\[1ex]
{\rm ii)} & \mbox{for $u < u_*$, $\IP$-a.s., there exists an infinite connected component in $\cV^u$.}
\end{array}
\end{equation}

\medskip\n
So far little is known about $u_*$, not even that it diverges as $d$ tends to infinity. As we show in this article, this is indeed the case, and our main result states that:

\begin{theorem}\label{theo0.1}
\begin{equation}\label{0.3}
\liminf\limits_d \;u_* \, / \log d \ge 1\,.
\end{equation}
\end{theorem}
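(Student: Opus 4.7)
The plan is: fix $\eta \in (0,1)$; by (\ref{0.2})(ii) it suffices to prove that for all sufficiently large $d$ and $u = (1-\eta)\log d$, the vacant set $\cV^u$ has an infinite connected component with positive probability. My strategy is to obtain, via a second-moment (Paley--Zygmund) estimate, a positive probability that the origin lies in a vacant cluster extending to a scale $n_d \to \infty$, and then to upgrade this to a truly infinite cluster via a block renormalization argument that draws on the decoupling properties of random interlacements known from \cite{Szni07a}, \cite{SidoSzni09a}.

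Concretely, I would let $\cP_n$ denote a family of self-avoiding paths of length $n$ from the origin, each built so that successive steps move in coordinate directions not recently used, with per-step branching close to $2d-1$ and vertices well spread out in the $\ell^1$ metric. Let $N_n$ count those $\pi \in \cP_n$ with $\pi \subset \cV^u$. Then (\ref{0.1}) gives
$$\IE[N_n] = \sum_\pi e^{-u\,\mathrm{cap}(\pi)}, \qquad \IE[N_n^2] = \sum_{\pi, \pi'} e^{-u\,\mathrm{cap}(\pi \cup \pi')}.$$
The trivial bound $\mathrm{cap}(K) \leq |K|$ yields $\IE[N_n] \geq |\cP_n|\, e^{-u(n+1)}$, while the variational lower bound $\mathrm{cap}(K) \geq |K|/\max_{x \in K} \sum_{y \in K} g(x,y)$, combined with the high-dimensional Green-function estimates $g(0,0) = 1 + O(1/d)$ and $g(0,x) = O(d^{-\|x\|_1})$, forces $\mathrm{cap}(\pi \cup \pi') \geq (1-O(1/d))\,|\pi \cup \pi'|$ for $\pi, \pi' \in \cP_n$. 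Parameterizing pairs by the length $j$ of their common prefix, the ratio $\IE[N_n^2]/\IE[N_n]^2$ reduces to a geometric sum in $e^u/(2d-1)$ multiplied by an error factor $e^{O(un/d)}$; at $u = (1-\eta)\log d$ the sum converges for $d$ large and the error is bounded provided $n \leq n_d$ for some $n_d \to \infty$. Paley--Zygmund then gives $\IP[N_n > 0] \geq c(\eta) > 0$ uniformly in this range of $n$.

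The main technical obstacle is this second-moment capacity control: one must design $\cP_n$ so that the union of two paths remains spread out enough in $\ell^1$ to keep $\max_{x\in K}\sum_{y\in K} g(x,y) \leq 1 + o_d(1)$ uniformly in $n \leq n_d$, while preserving a branching factor close to $2d-1$. Granted this, a standard block renormalization at scale $\sim n_d$ --- blocks are declared ``good'' if they contain a vacant crossing, and decoupling inequalities yield approximate independence across well-separated blocks --- converts positive probability of very long vacant paths into positive probability of an infinite vacant cluster. Letting $\eta \downarrow 0$ then yields $\liminf_d u_*/\log d \geq 1$.
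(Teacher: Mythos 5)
Your second-moment computation is plausible as far as it goes (and it is a genuinely different route to the \emph{local} statement than the paper's, which grows and prunes a random tree inside the hypercube $C=\{0,1\}^d$ rather than counting vacant self-avoiding paths), but by your own accounting it only yields $\IP[N_n>0]\ge c(\eta)$ for $n\le n_d$ with $n_d=O(d/\log d)$, since the error factor $e^{O(un/d)}$ coming from the mutual energy of two paths blows up beyond that scale. You therefore cannot let $n\to\infty$ at fixed $d$, and everything hinges on the final ``standard block renormalization,'' which is precisely where the argument breaks down. First, a renormalization scheme for this model needs the good-block probability to be very close to $1$: in the paper's version the bad-block probability must be $O(d^{-3})$, cf.~(\ref{2.4}) and Lemma \ref{lem2.3}, so that the recursion $q_{n+1}\le c_0\ell_n^2(q_n^2+\ve_n)$ closes. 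Paley--Zygmund only gives you a fixed small constant $c(\eta)$, and there is no general mechanism for random interlacements that converts ``crossing probability bounded below'' into percolation. Second, declaring a block good when it ``contains a vacant crossing'' does not make neighboring good blocks connect: a single vacant path crossing one block has no reason to meet a single vacant path crossing the adjacent block. The paper resolves this with the notion of a \emph{ubiquitous} component (covering all but a $d^{-2}$ fraction of $C$, hence unique by Lemma \ref{lem2.1} and forced to overlap with its neighbors' ubiquitous components), and the event $\cG_{y,u}$ explicitly requires the neighboring components to be connected. Third, the ``decoupling'' you invoke is only available either with an additive error controlled by the mutual energy of the two boxes, which again requires the bad-block probability to already be polynomially small for the recursion to absorb it (cf.~(\ref{2.25})--(\ref{2.26})), or at the cost of lowering the level $u$ --- and your argument budgets for no sprinkling at all.

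This last point is the structural difference: the paper spends a full $3\ve\log d$ of the level on exactly the steps you compress into one sentence. At level $u_0\sim(1-\ve)\log d$ it shows that most points of $C$ neighbor a vacant component of size $d^{c(\ve)}$, with failure probability $e^{-cd^{\ve/2}}$ --- far stronger than a Paley--Zygmund constant (Theorem \ref{theo3.1}); it then sprinkles down to $u_1\sim(1-2\ve)\log d$ and uses hypercube isoperimetry to merge these pieces into one ubiquitous component (Theorem \ref{theo4.2}); it sprinkles again to $u_2\sim(1-3\ve)\log d$ to connect the ubiquitous components of the five neighboring hypercubes (Theorem \ref{theo4.4}); only then does the $\IZ^2$-renormalization of Theorem \ref{theo2.2} apply. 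To repair your proof you would need to supply analogues of all three steps; as written, the passage from long vacant paths at a fixed level to an infinite vacant cluster is a genuine gap, not a routine verification.
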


Let us give some comments about Theorem \ref{theo0.1}. It is a natural question whether there is a matching upper bound for (\ref{0.3}), and in fact $\lim_d u_* / \log d = 1$. This is indeed the case and the matching upper bound can be found in Theorem 0.1 of \cite{Szni10a}. A similar asymptotic behavior holds for the critical value attached to the percolation of the vacant set of random interlacements on $2d$-regular trees, as $d$ goes to infinity, cf.~Remark 5.3 of \cite{Teix08b}, and this feature is consistent with the heuristic principle mentioned a the beginning of the Introduction. We refer to Remark \ref{rem4.5} for more on this topic.

\medskip
We will now briefly outline the proof of Theorem \ref{theo0.1}. The general strategy we follow is similar in its broad lines to that of \cite{Gord91}, and to Section 4 of \cite{AlonBenjStac04}, where the asymptotic behavior for large $d$ of the critical probability of Bernoulli site, and bond percolation, on $\IZ^d$ is analyzed. However, in contrast to Bernoulli percolation, random interlacements have a long range dependence, and the implementation of the general strategy in the above mentioned references is substantially different in the present context.

\medskip
The proof of (\ref{0.3}) begins with a reduction step. If $C = \{0,1\}^d \subseteq \IZ^d$, denotes the hypercube, we consider an event $\cG_u$, which roughly speaking, corresponds to the fact that $\cV^u \cap C$ has a ubiquitous component, i.e.~neighboring most sites of $C$, that a similar property holds for the four translates $C + 2y$, where $y$ in $\IZ^2 ( \subset \IZ^d)$ is a neighbor of the origin, and that all these components communicate together. We refer to (\ref{2.3}) for the precise definition. We show in Theorem \ref{theo2.2} that given any non-negative sequence $u(d)$ such that
\begin{equation}\label{0.4}
\limsup\limits_d \;d^3 \,\IP[\cG^c_{u(d)}] < \infty\,,
\end{equation}
it follows that
\begin{equation}\label{0.5}
u_* \ge u(d), \;\mbox{for all but finitely many $d$}\,.
\end{equation}

\medskip\n
In the case of finite range dependence, such statements are typically proved with the help of stochastic domination, cf.~\cite{LiggSchoStac97}, \cite{Pisz96}. In the present context, long range dependence prevents the use of such arguments. We instead prove (\ref{0.5}) with a variation of the renormalization procedure from Section 4 of \cite{Szni07a}.

\medskip
As a result of this reduction step the main Theorem \ref{theo0.1} follows once we show that for small $\ve > 0$,
\begin{equation}\label{0.6}
\mbox{condition (\ref{0.4}) holds when $u(d) \sim (1- 3 \ve) \log d$}\,.
\end{equation}

\n
The factor 3, which appears above, is immaterial, and simply reflects the presence of three steps in the proof of (\ref{0.6}). The first step corresponds to Theorem \ref{theo3.1}. We introduce a certain $u_0 \sim (1- \ve) \log d$, see (\ref{3.2}), and show that when $d$ is large, with overwhelming probability, most sites in $C$ have a neighbor in a substantial component of $\cV^{u_0} \cap C$, with at least $d^{c(\ve)}$ sites, with $c(\ve)$ a large positive constant independent of $d$. The proof of Theorem \ref{theo3.1} is somewhat intricate. Given a site in $C$, we construct a random tree in $C$, rooted at the site, with large depth $c^\prime(\ve)$ and about $c^{\prime\prime}(\ve) d^{\ve}$ descendants at each node. When $d$ is large, this random tree, except maybe for its root, is almost contained in $\cV^{u_0}$. Namely $\cI^{u_0}$ only meets a small fraction of each generation of the tree, see Lemma \ref{lem3.3}. After pruning this random tree, we construct a substantial random sub-tree, which except maybe for its root lies in $\cV^{u_0} \cap C$. This yields Theorem \ref{theo3.1}.

\medskip
Then the proof of (\ref{0.6}) involves two sprinkling operations. These two steps amount to successively replacing $\cV^{u_0}$ with $\cV^{u_1} \supset \cV^{u_0}$ and with $\cV^{u_2} \supset \cV^{u_1}$, where $u_1 \sim (1- 2 \ve) \log d$ and $u_2 = u(d) \sim (1 - 3 \ve) \log d$, as in (\ref{0.6}). The spirit is similar to \cite{AlonBenjStac04}, \cite{Gord91}. The first sprinkling is conducted in Theorem \ref{theo4.2}. With the help of the isoperimetric properties of $C$, it enables us to show that most of the substantial components of $\cV^{u_0} \cap C$ typically merge together at level $u_1$ and create a component of $\cV^{u_1} \cap C$, which is ubiquitous in $C$. The second sprinkling is conducted in Theorem \ref{theo4.4} and ensures that the various ubiquitous components in $\cV^{u_1} \cap (C + 2y)$, where $y$ in $\IZ^2$ is either the origin or a neighbor of the origin, typically communicate together at level $u_2$. The claim (\ref{0.6}) is then established at the end of this procedure, and as explained above, the main Theorem \ref{theo0.1} follows.

\medskip
We will now describe the structure of the article.

\medskip
In Section 1 we introduce notation and recall various useful facts concerning random walks and random interlacements. We develop in Lemma \ref{lem1.1} some estimates on the Green function and on random walks on $\IZ^d$, with $d$ tending to infinity.

\medskip
The main objective of Section 2 is Theorem \ref{theo2.2}, where the reduction step showing that (\ref{0.5}) follows from (\ref{0.4}) is established.

\medskip
In Section 3 we begin with the proof of (\ref{0.6}). We show in Theorem \ref{theo3.1} that with overwhelming probability most points of $C$ are neighbors of substantial components in $\cV^{u_0} \cap C$. An important intermediate step is achieved in Lemma \ref{lem3.3}.

\medskip
The last Section 4 contains two successive sprinkling operations in Theorem \ref{theo4.2} and Theorem \ref{theo4.4}, and completes the proof of (\ref{0.6}). We discuss further extensions and open problems in Remark \ref{rem4.5}.

\medskip
Let us finally explain our convention concerning constants. Throughout the text $c$ or $c^\prime$ denote positive constants, with values changing from place to place. These constants are independent of $d$. In Section 3 and 4, the constants may depend on the parameter $\ve$ from (\ref{3.1}). Numbered constants $c_0, c_1,\dots $ are fixed and refer to the value pertaining to their first appearance in the text. Dependence of constants on additional parameters appears in the notation. For instance $c(k)$ denotes a constant depending on $k$, in Section 1 and 2, and depending on $\ve$ and $k$ in Sections 3 and 4.

\section{Notation and some useful facts}
\setcounter{equation}{0}

The main object of this section is to introduce notation and collect several useful facts concerning Green function on $\IZ^d$ for large $d$, random walks, and random interlacements.

\medskip
We write $\IN = \{0,1,2,\dots \}$ for the set of natural numbers. Given a non-negative real number $a$, we write $[a]$ for the integer part of $a$. We let $|\cdot |_1$ and $| \cdot |_\infty$ respectively stand for the $\ell^1$ and the $\ell^\infty$ distances on $\IZ^d$. We denote with $(e_i)_{1 \le i \le d}$, the canonical basis of $\IR^d$. Unless explicitly stated otherwise, we tacitly assume that $d \ge 3$.

\medskip
We say that $x, x^\prime$ in $\IZ^d$ are neighbors, respectively $*$-neighbors, when $|x-x^\prime|_1 = 1$, respectively $|x-x^\prime |_\infty = 1$. By finite path, respectively finite $*$-path, we mean a finite sequence $x_0,x_1,\dots,x_N$ in $\IZ^d$, with $N \ge 0$, and such that $x_i$ and $x_{i+1}$ are neighbors, respectively $*$-neighbors, for each $0 \le i < N$. We often simply write path, or $*$-path, in place of finite path, or finite $*$-path, when this causes no confusion. We denote with $B_1(x,r)$ and $S_1(x,r)$, the closed $| \cdot |_1$-ball and $| \cdot |_1$-sphere of radius $r \ge 0$ and center $x \in \IZ^d$. We write $B_\infty(x,r)$ and $S_\infty(x,r)$ in the case of the $| \cdot |_\infty$-distance. Given $A,A^\prime$ subsets of $\IZ^d$, we write $A + A^\prime$ for the set of $x + x^\prime$ with $x$ in $A$ and $x^\prime$ in $A^\prime$, as well as $d_1(A,A^\prime) = \inf\{|x-x^\prime|_1$; $x \in A$, $x^\prime \in A^\prime\}$ for the mutual $|\cdot |_1$-distance between $A$ and $A^\prime$. We write $d_\infty(A,A^\prime)$ in the case of the $|\cdot |_\infty$-distance. When $A = \{x\}$ is a singleton, we write $d_1(x,A^\prime)$, resp. $d_\infty (x,A^\prime)$ for simplicity. The notation $U \subset \subset \IZ^d$ means that $U$ is a finite subset of $\IZ^d$. Given a subset $U$ of $\IZ^d$, we write $|U|$ for the cardinality of $U$, as well as $\partial U$, $\partial_{\rm int}\, U, \ov{U}$ for the boundary, the interior boundary, and the closure of $U$:
\begin{equation}\label{1.1}
\begin{split}
\partial U &= \{x \in U^c; \exists x^\prime \in U, |x-x^\prime|_1 = 1\}, 
\\
\partial_{\rm int} \,U &= \{x \in U; \; \exists x^\prime \in U^c, |x-x^\prime|_1 = 1\}, \; \ov{U} = U \cup \partial U\,.
\end{split}
\end{equation}
When $F, U$ are subsets of $\IZ^d$, we write
\begin{equation}\label{1.2}
\partial_F \,U = \partial U \cap F \;\;\mbox{and} \;\; \ov{U}^F = \ov{U} \cap F\,,
\end{equation}
for the relative boundary and the relative closure of $U$ in $F$.

\medskip
We denote with $W_+$ the space of nearest neighbor $\IZ^d$-valued trajectories defined for non-negative times and tending to infinity. We write $\cW_+$, $X_n$, $n \ge 0$, and $\cF_n$, $n \ge 0$, for the canonical $\sigma$-algebra, the canonical process, and the canonical filtration on $W_+$. The canonical shift on $W_+$ is denoted by $\theta_n$, $n \ge 0$, so that $\theta_n(w) (\cdot) = w( \cdot + n)$, for $w \in W_+$. Since $d \ge 3$, the simple random walk on $\IZ^d$ is transient, and for $x \in \IZ^d$, we denote with $P_x$ the restriction of the canonical law of the simple random walk starting at $x$ to the set $W_+$, which has full measure. We write $E_x$ for the corresponding expectation. When $\rho$ is a measure on $\IZ^d$, we denote with $P_\rho$ the measure $\sum_{x \in \IZ^d} \rho(x) P_x$ and with $E_\rho$ for the corresponding expectation. Given $U \subseteq \IZ^d$, we write $H_U = \inf\{n \ge 0; X_n \in U\}$, $\wt{H}_U = \inf\{n \ge 1; X_n \in U\}$, $T_U = \inf\{ n \ge 0; X_n \notin U\}$, for the entrance time in $U$, the hitting time of $U$, and the exit time from $U$. In case of a singleton $\{x\}$, we write $H_x$ and $\wt{H}_x$ for simplicity.

\medskip
The next lemma provides a bound on the exponential moment of the time spent by the simple random walk in a finite set, which we will use in several occasions in the sequel.

\begin{lemma}\label{lem1.1}
Given $K \subset \subset \IZ^d$, and $\lambda \ge 0$, such that $e^\lambda \sup_{x \in K} P_x [\wt{H}_K < \infty] < 1$, one has
\begin{equation}\label{1.3}
\sup\limits_{x \in \IZ^d} E_x [e^{\lambda \sum_{n \ge 0} 1\{X_n \in K\}}] \le 1 + \dis\frac{e^\lambda - 1}{1 - e^\lambda \sup\limits_{x \in K} P_x[\wt{H}_K < \infty]} \,.
\end{equation}
\end{lemma}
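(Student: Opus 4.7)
The plan is to control the total occupation time of $K$ via a standard geometric tail bound obtained from iterating the strong Markov property, then pass from the tail bound to the exponential moment by summation by parts.

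Set $\alpha := \sup_{y \in K} P_y[\wt{H}_K < \infty]$, so that the hypothesis reads $e^\lambda \alpha < 1$. Let $N = \sum_{n \ge 0} \mathbf{1}\{X_n \in K\}$ and define the successive return times to $K$ by $R_1 = H_K$ and $R_{k+1} = R_k + \wt{H}_K \circ \theta_{R_k}$ on $\{R_k < \infty\}$, with $R_{k+1} = \infty$ otherwise. Then $\{N \ge k\} = \{R_k < \infty\}$. By the strong Markov property applied at $R_k$ on $\{R_k < \infty\}$, using that $X_{R_k} \in K$,
\begin{equation*}
P_x[R_{k+1} < \infty] = E_x\bigl[\mathbf{1}\{R_k < \infty\}\, P_{X_{R_k}}[\wt{H}_K < \infty]\bigr] \le \alpha\, P_x[R_k < \infty].
\end{equation*}
Iterating and using $P_x[R_1 < \infty] \le 1$ yields
\begin{equation*}
P_x[N \ge k] \le \alpha^{k-1}\quad\text{for every } k \ge 1,\ x \in \IZ^d.
\end{equation*}

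To pass from this tail bound to the exponential moment, I would use the elementary identity $e^{\lambda N} - 1 = (e^\lambda - 1)\sum_{j=0}^{N-1} e^{\lambda j}$ (valid for any non-negative integer $N$), so that by Fubini
\begin{equation*}
E_x\bigl[e^{\lambda N}\bigr] = 1 + (e^\lambda - 1) \sum_{k \ge 1} e^{\lambda(k-1)}\, P_x[N \ge k].
\end{equation*}
Inserting the geometric bound and summing the resulting geometric series in $e^\lambda \alpha < 1$ gives
\begin{equation*}
E_x\bigl[e^{\lambda N}\bigr] \le 1 + (e^\lambda - 1) \sum_{k \ge 1} (e^\lambda \alpha)^{k-1} = 1 + \frac{e^\lambda - 1}{1 - e^\lambda \alpha},
\end{equation*}
uniformly in $x \in \IZ^d$, which is the claim.

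There is no real obstacle here: the only point that requires a small check is that the strong Markov argument is performed at the stopping time $R_k$ with $X_{R_k} \in K$, so the conditional probability of a further visit is uniformly bounded by $\alpha$; everything else is bookkeeping. One should note that the estimate is stated for every starting point $x \in \IZ^d$, not only $x \in K$, and this is automatic since the first visit occurs with probability at most $1$, independently of whether $x \in K$.
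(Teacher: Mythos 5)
Your proof is correct, and it assembles the same probabilistic ingredient (the strong Markov property at successive returns to $K$) in a different way from the paper. The paper works directly with the generating function $\psi(x)=E_x[e^{\lambda N}]$: a single application of the strong Markov property at $\wt{H}_K$ yields the self-referential inequality $\sup_K\psi\le e^\lambda+e^\lambda\alpha(\sup_K\psi-1)$, which is then solved for $\sup_K\psi$ — but this requires an extra approximation argument (truncating $N$ to a finite sum) to rule out $\sup_K\psi=\infty$ before the algebraic manipulation is legitimate. You instead iterate the strong Markov property to get the explicit geometric tail bound $P_x[N\ge k]\le\alpha^{k-1}$ and then recover the exponential moment by the layer-cake identity $e^{\lambda N}-1=(e^\lambda-1)\sum_{j=0}^{N-1}e^{\lambda j}$ and Tonelli. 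Your route is slightly longer in bookkeeping but cleaner in rigor: since all terms are non-negative, no truncation or finiteness argument is needed, and the case $N=\infty$ (which in any event has probability zero under the hypothesis $e^\lambda\alpha<1$, hence $\alpha<1$) causes no trouble. Both arguments give the identical final bound.
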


\begin{proof}
We assume $K \not= \emptyset$ without loss of generality, and write
\begin{equation*}
\psi(x) = E_x \Big[\exp\Big\{\lambda \dsl_{n \ge 0} 1\{X_n \in K\}\Big\}\Big], \; \mbox{for $x \in \IZ^d$}\,.
\end{equation*}

\n
An application of the strong Markov property shows that $\sup_{x \in \IZ^d} \psi(x) = \sup_{x \in K} \psi(x)$. Then an application of the strong Markov property at time $\wt{H}_K$ shows that for $x \in K$:
\begin{align*}
\psi(x) & \le e^\lambda P_x [\wt{H}_K = \infty] + e^\lambda E_x [\wt{H}_K < \infty, \psi(X_{\wt{H}_K})]
\\
& \le e^\lambda + e^\lambda P_x [\wt{H}_K < \infty] (\sup\limits_K \psi - 1) \,.
\end{align*}

\n
Using a routine approximation argument of $\sum_{n \ge 0} 1\{X_n \in K\}$ by a finite sum, to exclude the possibility that $\sup_K \psi$ is infinite, we find that
\begin{equation}\label{1.4}
\sup\limits_K \psi - 1 \le (e^\lambda - 1) (1 - e^\lambda \,\sup\limits_{x \in K} P_x [\wh{H}_K < \infty])^{-1} ,
\end{equation}
and (\ref{1.3}) follows.
\end{proof}

We denote with $g(\cdot, \cdot)$ the Green function:
\begin{equation}\label{1.5}
g(x,x^\prime) = \dsl_{n \ge 0} \, P_x [X_n = x^\prime], \;\mbox{for $x,x^\prime$ in $\IZ^d$},
\end{equation}
and write
\begin{equation}\label{1.6}
g(x) = g(x,0), \;\mbox{for $x \in \IZ^d$}\,.
\end{equation}

\medskip\n
The Green function is symmetric in its two variables and due to translation invariance $g(x,x^\prime) = g(x - x^\prime) = g(x^\prime - x)$. One has the following useful representation of $g(\cdot)$, see Montroll \cite{Mont56}, (2.10), p.~243:
\begin{equation}\label{1.7}
g(x) = \dis\int^\infty_0 e^{-u} \prod\limits^d_{i=1} I_{x_i} \Big(\mbox{\f $\dis\frac{u}{d}$}\Big) \,du, \;\mbox{for $x = (x_1,\dots,x_d) \in \IZ^d$},
\end{equation}

\n
where for $n \in \IZ$, $I_n(\cdot)$ stands for the modified Bessel function of order $n$, see Olver \cite{Olve74}, p.~60:
\begin{equation}\label{1.8}
I_n(u) = \dis\frac{1}{\pi}  \;\dis\int^\pi_0 e^{u \cos \theta} \cos n \,\theta \, d \,  \theta, \;u \in \IC\,.
\end{equation}

\medskip\n
We record in the next lemma some useful bounds on $g(\cdot)$ that pertain to its behavior in high dimension, close to the origin, cf.~(\ref{1.9}), (\ref{1.10}), and at large distances, cf.~(\ref{1.11}). We then derive a lower bound on the probability that the starting point of the random walk is the point of the trajectory with minimal $|\cdot |_1$-distance to the origin.

\begin{lemma}\label{lem1.2} 
$(d \ge 3)$
\begin{align}
& g(0) = 1 + \mbox{\f $\dis\frac{1}{2d}$} + o\Big(\mbox{\f $\dis\frac{1}{d}$}\Big), \;\mbox{as $d \rightarrow \infty$} \label{1.9}\,.
\\[1ex]
& \sup\limits_{|x|_1 = k} \;g(x) \le c(k)\,d^{-k}, \;\mbox{for $k \ge 1$} \,.\label{1.10}
\\[1ex]
& g(x) \le g(0) \wedge (c\, d / |x|_1)^{\wt{d}-2}, \;
 \mbox{for $x \in \IZ^d$, with $\wt{d} = \frac{d}{2}$ and $d \ge 5$} \,. \label{1.11}
\\[1ex]
& \inf\limits_{|x|_1 = k} \,P_x \big[\wt{H}_{B_1(0,k)} = \infty\big] \ge 1 - \mbox{\f $\dis\frac{c(k)}{d}$}, \;\mbox{for $k \ge 0$}\,. \label{1.12}
\end{align}
\end{lemma}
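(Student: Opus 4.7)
The plan is to derive all four estimates from the Bessel integral representation (\ref{1.7}) together with a small toolkit: the termwise inequality $I_n(t) \le \frac{(t/2)^{|n|}}{|n|!}\, I_0(t)$ (obtained by comparing the two power series term by term using $(j+|n|)! \ge |n|!\,j!$), the quadratic estimate $I_0(t) \le e^{t^2/4}$, and the large-argument asymptotic $I_0(t) \sim e^t/\sqrt{2\pi t}$.

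For (\ref{1.9}) I would expand $\log I_0(u/d) = u^2/(4d^2) + O(u^4/d^4)$, giving $I_0(u/d)^d = \exp\{u^2/(4d) + O(u^4/d^3)\}$. Inserting into $g(0) = \int_0^\infty e^{-u} I_0(u/d)^d\,du$ and using $\int_0^\infty e^{-u} u^2\,du = 2$ yields $g(0) = 1 + 1/(2d) + o(1/d)$; the contribution from $u \gg \sqrt{d}$ is absorbed by the $e^{-u}$ factor. For (\ref{1.10}) the termwise Bessel bound gives
\begin{equation*}
\prod_{i=1}^d I_{x_i}(u/d) \le \frac{(u/(2d))^k}{\prod_{x_i \ne 0}|x_i|!}\, I_0(u/d)^d,
\end{equation*}
so $g(x) \le (2d)^{-k}\int_0^\infty e^{-u} I_0(u/d)^d\, u^k\,du$. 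Splitting at $u = 2d$: on $[0,2d]$ one has $I_0(u/d)^d \le e^{u^2/(4d)} \le e^{u/2}$, while on $[2d,\infty)$ the asymptotic yields $e^{-u} I_0(u/d)^d \lesssim (d/u)^{d/2}$. Both pieces integrate to a constant $c(k)$, independent of $d$.

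The main obstacle is (\ref{1.11}), because for $|x|_1 \gg d$ the direct termwise bound becomes too wasteful. My plan is to project the walk onto a set $S$ of $\tilde d = d/2$ coordinates containing the $\tilde d$ indices with largest $|x_i|$, so that $|\pi_S(x)|_1 \ge |x|_1/2$. The projected process is a lazy simple random walk on $\IZ^{S}$ with holding probability $1/2$, whence $g(x) \le 2\, g^{(\tilde d)}(\pi_S(x))$. Combining the classical Green function estimate $g^{(\tilde d)}(y) \le c/|y|_2^{\tilde d - 2}$ with $|y|_2 \ge |y|_1/\sqrt{\tilde d}$ then yields a bound of the form $(c\sqrt{d}/|x|_1)^{\tilde d - 2}$, which implies the claimed $(cd/|x|_1)^{\tilde d - 2}$; the $g(0)$ in the minimum is just the a priori bound.

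Finally, (\ref{1.12}) will follow by a first-step decomposition. Among the $2d$ neighbors of $x \in S_1(0,k)$, the $m \le k$ lying in $B_1(0,k-1)$ are reached with probability $m/(2d) \le c(k)/d$; the remaining neighbors $z$ all satisfy $|z|_1 = k+1$. It then remains to bound $P_z[H_{B_1(0,k)} < \infty]$ through the union bound $\sum_{y \in B_1(0,k)} g(z,y)/g(0)$, using $g(z,y) \le c(j)/d^j$ for $|z-y|_1 = j$ by (\ref{1.10}), and counting $|B_1(0,k) \cap S_1(z,j)|$: the constraint $|y|_1 \le k < |z|_1$ forces at least $\lceil (j+1)/2 \rceil$ of the $j$ unit steps from $z$ to $y$ to be ``inward'', i.e., along one of the at most $k+1$ directions opposite to the nonzero coordinates of $z$, yielding $|B_1(0,k) \cap S_1(z,j)| \le c(k,j)\,d^{(j-1)/2}$. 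Summing over $j \ge 1$ gives $\sum_{y \in B_1(0,k)\setminus\{z\}} g(z,y) \le c(k)/d$, and combining this with the first-step contribution delivers (\ref{1.12}).
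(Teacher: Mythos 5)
Your treatments of (\ref{1.9}), (\ref{1.10}) and (\ref{1.12}) are sound. For (\ref{1.10}) you follow essentially the paper's route (the Montroll representation (\ref{1.7}) plus the termwise bound $I_n(t)\le (t/2)^{|n|}I_0(t)/|n|!$), replacing the Laplace-method evaluation by the cruder but sufficient $I_0(t)\le e^{t^2/4}$; for (\ref{1.9}) you give a self-contained computation where the paper cites Montroll. Your (\ref{1.12}) is genuinely different from the paper's (which moves the walk out to $S_1(0,3k)$ in $2k$ steps and applies (\ref{1.10}) from distance $\ge 2k$, where each Green function term is already $O(d^{-2k})$): your one-step decomposition forces you to control $\sum_{y\in B_1(0,k)}g(z,y)$ from $|z|_1=k+1$, and the counting argument showing $|B_1(0,k)\cap S_1(z,j)|\le c(k,j)\,d^{(j-1)/2}$ (at least $\lceil (j+1)/2\rceil$ units of $\ell^1$-mass of $y-z$ must sit on the nonzero coordinates of $z$) is correct and does the job.

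The genuine gap is in (\ref{1.11}). The projection step itself is fine: with $S$ the $\wt d$ indices carrying the largest $|x_i|$ one has $|\pi_S(x)|_1\ge |x|_1/2$, the projected walk is lazy with holding probability $1/2$, and $g(x)\le 2\,g^{(\wt d)}(\pi_S(x))$. But the ``classical Green function estimate $g^{(\wt d)}(y)\le c/|y|_2^{\wt d-2}$'' with a dimension-independent constant $c$ is false, and everything downstream depends on it. The sharp constant in $g^{(m)}(y)\asymp a_m|y|_2^{-(m-2)}$ grows like $(cm)^{m/2}$ (Stirling applied to $\Gamma(m/2-1)$), and one can see directly that no universal constant works: for $y=(1,\dots,1)\in\IZ^m$ one has $g^{(m)}(y)\ge P_0[X_m=y]=m!/(2m)^m\approx (2e)^{-m}$, whereas $c\,|y|_2^{-(m-2)}=c\,m^{-(m-2)/2}$ is far smaller for large $m$. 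For the same reason your intermediate conclusion $g(x)\le (c\sqrt d/|x|_1)^{\wt d-2}$ is false (test it on $x=(1,\dots,1)$), even though the final bound $(cd/|x|_1)^{\wt d-2}$ you want to deduce from it is true. If you track the correct dimension dependence $a_m\le (Cm)^{m/2}$ the arithmetic does close (the extra $(C\wt d)^{\wt d/2}$ combines with $|y|_2\ge |y|_1/\sqrt{\wt d}$ to give $(cd/|x|_1)^{\wt d-2}$ up to an absorbable polynomial prefactor), but a non-asymptotic bound $g^{(m)}(y)\le (Cm)^{m/2}|y|_2^{-(m-2)}$ valid for all $y\ne 0$ uniformly in $m$ is precisely the hard quantitative content of (\ref{1.11}); it is not quotable as a classical fact and must be proved, e.g.\ exactly as the paper does, by combining the Carne--Varopoulos bound (\ref{1.16}) with the on-diagonal estimate (\ref{1.17}) and summing over time. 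As it stands, your argument for (\ref{1.11}) defers the essential difficulty to an unproven (and, as stated, false) input.
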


\begin{proof}
The proof of (\ref{1.9}) can be found in \cite{Mont56}, p.~264. With regard to (\ref{1.10}), we only need to consider the case $d > k$, for the case $d \le k$ then follows by adjusting constants. Using symmetry we can assume that the components $x_i$ of $x$ are non-negative, vanish when $i >k$, and add-up to $k$. This amounts to a finite number of possibilities for $x_i, 1 \le i \le k$, bounded by $c(k)$. We now show that for any such choice of $x_1,\dots,x_k$, and $x = (x_1,\dots,x_k, 0, \dots , 0) \in \IZ^d$, one has:
\begin{equation}\label{1.13}
\limsup\limits_{d \r \infty} \,g(x) / \Big(\dis\frac{k !}{x_1! \dots x_k!} \;(2d)^{-k}\Big) \le 1 \,.
\end{equation}

\n
This is more than enough to deduce (\ref{1.10}). To prove the claim (\ref{1.13}) we note that in view of (\ref{1.7}), one has for $M > 0$:
\begin{equation}\label{1.14}
\begin{split}
g(x)/d & = \dis\int^\infty_0 e^{-du} \,\prod\limits^d_{i=1} \,I_{x_i} (u) du
\\
& \le \dis\int^M_0 e^{-d(u - \log I_0 (u))} \prod\limits^k_{i=1} \;\Big(\mbox{\f $\dis\frac{u}{2}$}\Big)^{x_i} / x_i ! \,du + \dis\int^\infty_M \,(e^{-u} \,I_0(u))^d \,du \,,
\end{split}
\end{equation}

\medskip\n
where we have used the fact that $I_n(u) \le I_0(u)$, for $u \ge 0$, see (\ref{1.8}), as well as the inequality $I_n(u) \le (\frac{u}{2})^n \,I_0(u) / n!$, for $n \ge 0, u \ge 0$,  which stems from the expansion $I_n(u) = (\frac{u}{2})^n \sum_{m \ge 0} \; \frac{(u^2/4)^m}{m! (m + n)!}$, cf.~\cite{Olve74}, p.~60.

\medskip
Since $I_0(u) \sim e^u (2 \pi u)^{-1/2}$, as $u$ tends to infinity, cf.~\cite{Olve74}, p.~83, we can choose $M \ge 1$, such that the last integral in (\ref{1.14}) is bounded by 
\begin{equation}\label{1.15}
\dis\int^\infty_M ( \pi u)^{-d/2} du = \Big(\mbox{\f $\dis\frac{d}{2}$} - 1\Big)^{-1} \pi^{-\frac{d}{2}}\, M^{-(\frac{d}{2} - 1)} \,.
\end{equation}

\n
When $d$ is large, this term decays much faster than the expression dividing $g(x)$ in (\ref{1.13}). We thus restrict our attention to the first integral in the second line of (\ref{1.14}). It equals:
\begin{equation*}
2^{-k} \Big(\mbox{\f $\dis\prod\limits^k_{i=1}$} x_i!\Big)^{-1} \dis\int^M_0 \,e^{-d(u - \log I_0(u))} u^k du \,.
\end{equation*}

\medskip\n
Using the Laplace method, cf.~\cite{Olve74}, p.~81, Theorem 7.1, (where $\mu = 1$, $\lambda = k+1$), we see that the contribution of the neighborhood of $0$ in the above integral is dominant when $d$ tends to infinity, and the whole expression is equivalent to:
\begin{equation*}
2^{-k} \Big(\mbox{\f $\dis\prod\limits^k_{i=1}$} x_i!\Big)^{-1} \,k! \,d^{-k-1}\,.
\end{equation*}

\n
With (\ref{1.14}) the claim (\ref{1.13}) follows, and this proves (\ref{1.10}).

\medskip
We then turn to the proof of (\ref{1.11}). The bound $g(x) \le g(0)$ is classical. We thus only need to show that for $x \not= 0$, $g(x) \le (c \wt{d} / |x|_1)^{\wt{d} - 2}$. From the Carne-Varopoulos bound, cf.~\cite{Carn85}, we know that:
\begin{equation}\label{1.16}
P_0 [X_n = x] \le 2 \exp\Big\{ - \mbox{\f $\dis\frac{|x|_1^2}{2n}$}\Big\}, \;\mbox{for $x$ in $\IZ^d$, and $n \ge 1$} \,.
\end{equation}

\n
From Madras-Slade \cite{MadrSlad93}, p.~380, (A.21), one has the diagonal upper bound:
\begin{equation}\label{1.17}
P_0 [X_n = 0] \le \Big(\mbox{\f $\dis\frac{\pi}{4} \;\frac{d}{n}$}\Big)^{d/2}, \;\;\mbox{for $n \ge 1$} \,.
\end{equation}

\n
Using the Chapman-Kolmogorov equation and reversibility one finds that
\begin{equation}\label{1.18}
P_0 [X_{2n} = 0] = \sup\limits_{x^\prime \in \IZ_d} \,P_0[X_{2n} = x^\prime] \ge \sup\limits_{x^\prime \in \IZ^d} \,P_0 [X_{2n+1} = x^\prime]\,.
\end{equation}

\n
Thus with (\ref{1.16}), (\ref{1.17}), adjusting constants we see that for $n \ge 2$,
\begin{equation}\label{1.19}
P_0 [X_n = x] \le \Big(c\; \mbox{\f $\dis\frac{d}{n}$}\Big)^{d/4} \;\exp\Big\{ - c \;\mbox{\f $\dis\frac{|x|^2_1}{n}$}\Big\}\,,
\end{equation}

\n
and this inequality is readily extended to $n = 1$. As a result we see that for $x \not= 0$,
\begin{equation}\label{1.20}
\begin{split}
g(x) & = \dsl_{n \ge 1} P_0 [ X_n = x] \le \dis\int^\infty_1 \,\Big(c \; \mbox{\f $\dis\frac{d}{u}$}\Big)^{d/4}\; \exp\Big\{ - c \;\mbox{\f $\dis\frac{|x|^2_1}{u}$}\Big\} \,du
\\
& \le (cd)^{d/4} \dis\int^\infty_0  \Big(c \; \mbox{\f $\dis\frac{v}{|x|_1^2}$}\Big)^{d/4} \;\exp\{-v\} \; c \; \mbox{\f $\dis\frac{|x|_1^2}{v^2}$} \;dv 
\\
&\stackrel{d \ge 5}{=} (cd)^{d/4} \,\Gamma\Big(\mbox{\f $\dis\frac{d}{4}$} - 1\Big) (c \,|x|_1)^{-(\frac{d}{2} - 2)} \,.
\end{split}
\end{equation}

\n
Using the bound $d^2 \le c^{d/2}$ and $\Gamma (\frac{d}{4} - 1) \le c(\frac{d}{4})^{d/4}$, see\cite{Olve74}, p.~88, the claim (\ref{1.11}) follows.

\medskip
Let us finally prove (\ref{1.12}). The case $k = 0$ is a direct consequence of (\ref{1.9}). We thus restrict our attention to the case $k \ge 1$. Adjusting constants and using symmetry, we assume without loss of generality, that $d > 10 k$, and the components $x_i$ of $x$, are non-negative, vanish when $i > k$, and add-up to $k$. We then write
\begin{equation}\label{1.21}
\begin{array}{l}
P_x [\wt{H}_{B_1(0,k)} = \infty] \ge P_x[|X_{2k}|_1 = 3k] \;\inf\limits_{|z|_1 = 3k} \,P_z[H_{B_1(0,k)} = \infty] \ge 
\\[1ex]
P_x [|X_{2k}|_1 = 3k] (1 - \sup\limits_{|z|_1 = 3k} \,P_z[H_{B_1(0,k)} < \infty])\,.
\end{array}
\end{equation}
With (\ref{1.10}) it follows that
\begin{equation}\label{1.22}
\begin{split}
\sup\limits_{|z|_1 = 3k} P_z [H_{B_1(0,k)} < \infty] & \le \sup\limits_{|z|_1 = 3k, |z^\prime|_1 \le k} g(z^\prime - z) |B_1(0,k)| 
\\
&\hspace{-1ex} \stackrel{(\ref{1.10})}{\le} c(k) \,d^{-2k} (1 + 2d + \dots + (2d)^k)
\le c^\prime(k) \,d^{-k}\,.
\end{split}
\end{equation}

\medskip\n
On the other hand we also have the lower bound
\begin{equation}\label{1.23}
P_x [|X_{2k}|_1 = 3k] \ge \Big(1 - \mbox{\f $\dis\frac{k}{d}$}\Big) \Big(1 - \mbox{\f $\dis\frac{k+1}{d}$}\Big) \dots \Big(1 - \mbox{\f $\dis\frac{3k-1}{d}$}\Big) \ge \Big(1 - \mbox{\f $\dis\frac{3k}{d}$}\Big)^{2k} \ge 1 -\mbox{\f $ \dis\frac{c(k)}{d}$} \;.
\end{equation}

\medskip\n
Inserting (\ref{1.22}), (\ref{1.23}) in (\ref{1.21}) proves (\ref{1.12}) for $d \ge c(k)$. Adjusting constants, the general case $d \ge 3$ follows.
\end{proof}

\begin{remark}\label{rem1.3}  ~

\medskip\n
{\rm  1) One can provide in a straightforward manner a companion lower bound to (\ref{1.13}). Indeed in the notation of (\ref{1.13}) one finds that
\begin{equation*}
g(x) \ge P_0 [X_k = x] = \dis\frac{k !}{x_1! \dots x_k!} \;(2d)^{-k} \,,
\end{equation*}

\medskip\n
so that in fact for $x = (x_1,\dots,x_d) \in S_1 (0,k)$, with $x_i \ge 0$ and $x_i = 0$, for $i > k$, one has:
\begin{equation*}
\lim\limits_{d \r \infty} \;g(x) / \Big(\dis\frac{k!}{x_1! \dots x_k!} \;(2d)^{-k}\Big) = 1\,.
\end{equation*}

\medskip\n
Thus for $x$ as above, when $d$ tends to infinity, the ``main contribution'' to $g(x) = g(0,x)$ in (\ref{1.5}), comes from the term $P_0[X_k = x]$.

\bigskip\n
2) The proof of (\ref{1.11}) can easily be modified so that, adjusting constants, one obtains a similar bound to (\ref{1.11}) where $\wt{d} = \frac{d}{2}$ is replaced with $\wt{d}$ arbitrarily close to $d$. The bound stated in (\ref{1.11}) will however suffice for our purpose.} \hfill $\square$
\end{remark}

We now recall some facts concerning equilibrium measure and capacity. Given $K \subset \subset \IZ^d$, we write $e_K$ for the equilibrium measure of $K$, and cap$(K)$ for its total mass, the capacity of $K$:
\begin{align}
&e_K(x) = P_x [\wt{H}_K = \infty], \;\mbox{for $x \in K$, and $e_K(x) = 0$, for $x \notin K$,} \label{1.24}
\\
&{\rm cap} (K) = e_K (\IZ^d) = \dsl_{x \in K} \,P_x [\wt{H}_K = \infty] \,.\label{1.25}
\end{align}
 
\medskip\n
The capacity is known to be subadditive in the sense that ${\rm cap}(K \cup K^\prime) \le {\rm cap}(K) + {\rm cap}(K^\prime)$, for $K, K^\prime \subset \subset \IZ^d$, (this readily follows from (\ref{1.25})). One also has, cf.~(1.62) of \cite{Szni07a},
\begin{equation}\label{1.26}
{\rm cap}(\{x\}) = g(0)^{-1}, \; \mbox{for $x \in \IZ^d$},
\end{equation}

\n
and the probability to enter $K \subset \subset \IZ^d$ can be expressed as
\begin{equation}\label{1.27}
P_x[H_K < \infty] = \dsl_{x^\prime \in K} \,g (x,x^\prime) \,e_K(x^\prime) \,.
\end{equation}

\n
Further one has the bounds, see (\ref{1.9}) of \cite{Szni07a}
\begin{equation}\label{1.28}
\begin{split}
\dsl_{x^\prime \in K} g(x,x^\prime) / (\sup\limits_{z \in K} \;\dsl_{x^\prime \in K} \, g(z,x^\prime)\Big) & \le P_x [H_K < \infty]  
\\[-2ex]
& \le \dsl_{x^\prime \in K} g(x,x^\prime) / \Big(\inf\limits_{z \in K} \;\dsl_{x^\prime \in K} g(z,x^\prime)\Big)\,.
\end{split}
\end{equation}

\medskip\n
We then turn to the description of random interlacements. We refer to Section 1 of \cite{Szni07a} for more details. We denote by $W$ the space of doubly infinite nearest neighbor $\IZ^d$-valued trajectories, which tend to infinity at positive and negative infinite times and by $W^*$ the space of equivalence classes of trajectories in $W$ modulo time-shift. We let $\pi^*$ stand for the canonical map from $W$ into $W^*$. We write $\cW$ for the canonical $\sigma$-algebra on $W$ generated by the canonical coordinates $X_n$, $n \in \IZ$, and we endow $W^*$ with $\cW^* = \{A \subseteq W^*; (\pi^*)^{-1}(A) \in\cW\}$, the largest $\sigma$-algebra on $W^*$ for which $\pi^*: (W, \cW) \rightarrow (W^*, \cW^*)$ is measurable. The canonical probability space $(\Omega, \cA, \IP)$ for random interlacements is defined as follows.

\medskip
We consider the space of point measures on $W^* \times \IR_+$
\begin{equation}\label{1.29}
\begin{split}
\Omega = \Big\{\omega = \dsl_{i \ge 0} \,\delta_{(w_i^*,u_i)}, &\; \mbox{with} \; (w_i^*,u_i) \in W^* 
\times \IR_+, \;\mbox{for} \; i \ge 0, \;\mbox{and}
\\[-2ex]
&\;\omega(W^*_K \times [0,u]) < \infty, \;\mbox{for any} \;K \subset \subset \IZ^d \;\mbox{and} \; u \ge 0\Big\}\,,
\end{split}
\end{equation}

\n
where for $K \subset \subset \IZ^d$, $W^*_K \subseteq W^*$ stands for the set of trajectories modulo time-shift that enter $K$:
\begin{equation}\label{1.30}
\mbox{$W^*_K = \pi^*(W_K)$, with $W_K = \{w \in W$; $X_n(w) \in K$, for some $n \in \IZ$}\}\,.
\end{equation}

\n
We let $\cA$ stand for the $\sigma$-algebra on $\Omega$, which is generated by the evaluation maps $\omega \r \omega(D)$, where $D$ runs over the product $\sigma$-algebra $\cW^* \times \cB(\IR_+)$. We denote with $\IP$ the probability on $(\Omega, \cA)$, which is the Poisson measure with intensity $\nu(d w^*)du$ giving finite mass to the sets $W^*_K \times [0,u]$, for $K \subset \subset \IZ^d$, $u \ge 0$, with $\nu$ the unique $\sigma$-finite measure on $(W^*, \cW^*)$ such that for any $K \subset \subset \IZ^d$, cf.~Theorem 1.1 of \cite{Szni07a}:
\begin{equation}\label{1.31}
1_{W^*_K}\, \nu = \pi^* \circ Q_K\,,
\end{equation}

\n
with $Q_K$ the finite measure on $W^0_K$, the subset of $W_K$ of trajectories that enter $K$ for the first time at time $0$, such that for $A, B \in \cW_+$, $x \in \IZ^d$, in the notation of (\ref{1.24}):
\begin{equation}\label{1.32}
Q_K [(X_{-n})_{n \ge 0} \in A, \;X_0 = x, (X_n)_{n \ge 0} \in B] = P_x[A | \wt{H}_K = \infty] \,e_K (x) \,P_x [B]\,.
\end{equation}

\n
As a direct consequence of (\ref{1.30}) - (\ref{1.32}), we see that for $K, K^\prime \subset \subset \IZ^d$,
\begin{equation}\label{1.33}
\begin{array}{l}
\nu(W^*_K \cap W^*_{K^\prime}) \le P_{e_K} [H_{K^\prime} < \infty]  + P_{e_{K^\prime}} [H_K < \infty] \stackrel{(\ref{1.27})}{=} 2 \cE (K,K^\prime), \;\mbox{with}
\\
\cE(K,K^\prime) \stackrel{\rm def}{=} \dsl_{x \in K, x^\prime \in K^\prime} e_K(x) \,g(x,x^\prime) \,e_{K^\prime}(x^\prime), \, \mbox{the ``mutual energy of $K$ and $K^\prime$''}\,.
\end{array}
\end{equation}

\n
For $K \subset \subset \IZ^d$, $u \ge 0$, one defines on $(\Omega, \cA)$ the following random variable, valued in the set of finite point measures on $(W_+, \cW_+)$:
\begin{equation}\label{1.34}
\mu_{K,u} (\omega) = \dsl_{i \ge 0} \,\delta_{(w_i^*)^{K,+}} \;1\{w_i^* \in W^*_K, u_i \le u\}, \;\mbox{for} \;\omega = \dsl_{i \ge 0} \,\delta_{(w_i^*,u_i)} \in \Omega\,,
\end{equation}

\n
where for $w^*$ in $W^*_K$, $(w^*)^{K,+}$ stands for the trajectory in $W_+$, which follows step by step $w^*$ from the time it first enters $K$. It is shown in Proposition 1.3 of \cite{Szni07a} that for $K \subset \subset \IZ^d$:
\begin{equation}\label{1.35}
\mbox{$\mu_{K,u}$ is a Poisson point process on $(W_+, \cW_+)$ with intensity measure $u \, P_{e_K}$}\,.
\end{equation}

\n
Given $\omega \in \Omega$, the interlacement at level $u \ge 0$ is the subset of $\IZ^d$ defined as:
\begin{equation}\label{1.36}
\begin{split}
\cI^u(\omega) & = \bigcup\limits_{u_i \le u} {\rm range} (w_i^*), \; \mbox{if $\omega = \dsl_{i \ge 0} \,\delta_{(w_i^*, u_i)}$},
\\
& = \bigcup\limits_{K \subset \subset \IZ^d} \; \bigcup\limits_{w \in {\rm Supp} \, \mu_{K,u}(\omega)} \,w(\IN)\,,
\end{split}
\end{equation}

\n
where for $w^* \in W^*$, range$(w^*) = w(\IZ)$, for any $w \in W$, with $\pi^*(w) = w^*$, and the notation ${\rm Supp} \,\mu_{K,u}(\omega)$ refers to the support of the point measure $\mu_{K,u}(\omega)$. The vacant set a level $u$ is then defined as
\begin{equation}\label{1.37}
\cV^u(\omega) = \IZ^d \backslash \cI^u(\omega), \;\mbox{for} \; \omega \in \Omega, \,u \ge 0\,.
\end{equation}
One then sees, cf.~(1.54) of \cite{Szni07a}, that
\begin{equation}\label{1.38}
\cI^u(\omega) \cap K = \bigcup\limits_{w \in {\rm Supp} \,\mu_{K,u}(\omega)} w(\IN) \cap K, \; \mbox{for} \; K \subset \subset \IZ^d, \;u \ge 0, \; \omega \in \Omega \,.
\end{equation}
With (\ref{1.35}) it readily follows that
\begin{equation}\label{1.39}
\IP[\cV^u \supseteq K] = \exp\{ - u \,{\rm cap} (K)\}, \;\mbox{for all $K \subset \subset \IZ^d$},
\end{equation}

\n
which yields the characterization (\ref{0.1}) of the law $Q_u$ on $\{0,1\}^{\IZ^d}$ of the indicator function of $\cV^u(\omega)$, see also Remark 2.2 2) of \cite{Szni07a}.

\medskip
This concludes Section 1 and the above brief account of various useful facts that will be used for the proof of Theorem \ref{theo0.1} in the next three sections.

\section{From local to global}
\setcounter{equation}{0}

In this section we develop a reduction step for the proof of Theorem \ref{theo0.1}. In essence, the main result of this section, Theorem \ref{theo2.2}, shows that, when the level $u$ is such that with high $\IP$-probability $\cV^u$ induces ubiquitous connecting components in the hypercube $\{0,1\}^d$ and its four neighboring hypercubes $2 y + \{0,1\}^d$, with $|y|_1 = 1$ and $y$ in $\IZ^2$, then $\cV^u$ percolates. Similar statements are well-known in the case of finite range dependence, see Liggett-Schonmann-Stacey \cite{LiggSchoStac97}, Pisztora \cite{Pisz96}. The difficulty we encounter here, stems from the long range dependence of random interlacements. We use a variation on the renormalization method of Section 4 in \cite{Szni07a} to cope with this problem.

\medskip
We tacitly identify $\IZ^2$ with the collection of sites in $\IZ^d$ of the form $y = (y_1,y_2,0,\dots,0)$. For $y$ in $\IZ^2$, we consider the hypercube translated at $2y$:
\begin{equation}\label{2.1}
C_y = C + 2y, \;\mbox{where} \; C = \{0,1\}^d\,.
\end{equation}

\n
The notion of {\it ubiquitous component} of $\cV^u \cap C_y$ relevant for our purpose appears in the next lemma. We recall (\ref{1.2}) for the notation.

\begin{lemma}\label{lem2.1}
When $d \ge c$, any subset $V$ of $C$ contains at most one connected component $U$ of $V$ such that 
\begin{equation}\label{2.2}
|\ov{U}\,^C| \ge (1-d^{-2}) \,|C|\,,
\end{equation}

\n
and any other connected component $U^\prime$ of $V$ satisfies $|U| > |U^\prime |$.
\end{lemma}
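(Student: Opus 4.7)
The plan is a direct combinatorial counting argument exploiting two competing estimates on the size of any component $U \subseteq V \subseteq C$ satisfying the ubiquity condition \eqref{2.2}. The geometric input is that each vertex $x$ of $C = \{0,1\}^d$ has exactly $d$ nearest $\IZ^d$-neighbors inside $C$: for each coordinate $i$, exactly one of $x \pm e_i$ lies in $\{0,1\}^d$. Consequently the trivial edge count gives $|\partial U \cap C| \le d|U|$, and inserting this into \eqref{2.2} yields the lower bound
\begin{equation*}
|U| \;\ge\; \frac{(1-d^{-2})\,|C|}{d+1}\,,
\end{equation*}
which every ubiquitous component must satisfy.

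For the uniqueness claim I would argue by contradiction: assume $U_1$ and $U_2$ are two distinct connected components of $V$, both satisfying \eqref{2.2}. Inclusion-exclusion inside $C$ immediately gives $|\ov{U_1}^C \cap \ov{U_2}^C| \ge (1-2d^{-2})|C|$. The key observation is that every $x$ in this intersection must lie in $C \setminus V$. Indeed, $x$ cannot belong to $U_1$, for otherwise from $x \in \ov{U_2}$ and $x \notin U_2$ one would obtain a neighbor of $x$ in $U_2$, forcing $U_1 = U_2$ as components of $V$; symmetrically $x \notin U_2$; and if $x$ belonged to any other component of $V$, then $x \in \partial U_1 \cap \partial U_2$ would provide neighbors of $x$ in both $U_1$ and $U_2$, again merging them. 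Hence $|U_1|+|U_2| \le |V| \le 2d^{-2}|C|$, whereas the first paragraph applied to each $U_i$ gives $|U_1|+|U_2| \ge 2(1-d^{-2})|C|/(d+1)$. These two estimates are compatible only when $(d-2)(d+1) \le 0$, which fails as soon as $d \ge 3$.

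For the size comparison, let $U$ be the (now unique) ubiquitous component and let $U'$ be any other component of $V$. The same disjointness reasoning shows $U' \cap \ov{U} = \emptyset$ (a point of $U'$ adjacent to $U$ would have to lie in $U$ itself), so $U' \subseteq C \setminus \ov{U}^C$, giving $|U'| \le d^{-2}|C|$. Comparing with the lower bound on $|U|$ reduces the desired strict inequality $|U| > |U'|$ to $(d-2)(d+1) > 0$, again valid for $d \ge 3$.

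I do not foresee any real obstacle: the argument is purely deterministic and elementary, and the constant $c$ in the statement can be taken as small as $3$. The only point requiring a little care is the hypercube-specific count of exactly $d$ in-box neighbors per vertex (a generic $d$-dimensional box would produce $2d$ and weaken both sides of the comparison), since this is what makes the lower bound $|U| \gtrsim |C|/d$ outrun the upper bound $|U'| \le |C|/d^2$ by precisely one factor of $d$.
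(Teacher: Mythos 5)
Your proof is correct and rests on exactly the same two estimates as the paper's: the edge count $|\partial_C U|\le d\,|U|$ giving $|U|\ge (d+1)^{-1}(1-d^{-2})|C|$ for any ubiquitous component, versus the bound $|C\setminus \ov{U}^{\,C}|\le d^{-2}|C|$ for everything disjoint from $\ov{U}$. The paper reaches the uniqueness slightly more directly (a second ubiquitous component would be too large to fit in $C\setminus\ov{U}^{\,C}$, hence must meet $\ov{U}$ and coincide with $U$), whereas you route through inclusion--exclusion on the two closures; the difference is cosmetic and both yield the stated conclusion for all $d\ge 3$.
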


\begin{proof}
Let $U^\prime$ be another component of $V$ satisfying (\ref{2.2}). Since $|\partial_C U^\prime| \le d  |U^\prime|$, we find that $|U^\prime | \ge (d+1)^{-1} (1 - d^{-2}) \,|C| > d^{-2} |C| \ge |C \backslash \ov{U}|$, when $d \ge c$. As a result $U^\prime$ meets $\ov{U}$, so that $U$ and $U^\prime$ coincide, a contradiction. The same reasoning shows that any other component $U^\prime$ of $V$ satisfies $|U^\prime | < |U|$.
\end{proof}

Given $y$ in $\IZ^2$ and $u \ge 0$, we introduce the local ``good'' event for the ``neighborhood'' of $C_y$:
\begin{equation}\label{2.3}
\begin{split}
\cG_{y,u} = &\; \Big\{\omega \in \Omega; \;\mbox{for each $y^\prime \in \IZ^2$ with $|y^\prime - y|_1 \le 1$, $\cV^u (\omega) \cap C_{y^\prime}$ contains a}
\\
&\; \mbox{connected component $\cC_y$ with $|\overline{\cC}_y \cap C_y| \ge (1-d^{-2})|C_y|$, and these}
\\
&\;\mbox{components are connected in $\cV^u(\omega) \cap \Big(\bigcup\limits_{|y^\prime - y|_1 \le 1} C_{y^\prime}\Big)\Big\}$}\,.
\end{split}
\end{equation}

\n
When $y = 0$, we write $\cG_u$ in place of $\cG_{y = 0,u}$ for simplicity. When $d \ge c$, with $c$ as in Lemma \ref{lem2.1}, and $\o$ is such that $\{y \in \IZ^2; \o \in \cG_{y,u}\}$ has an infinite connected component it follows that $\cV^u(\omega)$ has an infinite connected component in $\cV^u(\o) \cap (\bigcup_{y \in \IZ^2} C_y)$. In particular the percolation of the set of $y$ in $\IZ^2$ where $\cG_{y,u}$ occurs ensures percolation of $\cV^u$. We will rely on this fact to prove the next theorem, which is the main result of this section.

\begin{theorem}\label{theo2.2}
For any non-negative sequence $u(d)$, $d \ge 3$, such that
\begin{equation}\label{2.4}
\limsup\limits_d \;d^3 \; \IP[\cG^c_{u(d)}] < \infty\,,
\end{equation}
one has
\begin{equation}\label{2.5}
u_*(d) \ge u(d), \;\mbox{for all but finitely many $d$}\,.
\end{equation}
\end{theorem}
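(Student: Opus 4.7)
The plan is to reduce the claim to a site-percolation statement on $\IZ^2$ for the collection of good events $\{\cG_{y,u}\}_{y \in \IZ^2}$, and then to handle the long-range dependence of random interlacements via a multi-scale renormalization. As noted right after (\ref{2.3}), once $d \ge c$, the existence of an infinite connected $\IZ^2$-cluster of sites $y$ with $\omega \in \cG_{y,u}$ immediately yields an infinite connected component of $\cV^u \cap (\bigcup_y C_y)$, and hence $u_*(d) \ge u$. It thus suffices to show that, under (\ref{2.4}) and for $d$ large, the probability that $0 \in \IZ^2$ lies in an infinite $*$-connected cluster of ``bad'' sites $\{y : \omega \in \cG^c_{y,u(d)}\}$ tends to $0$; standard planar-duality arguments for site percolation on $\IZ^2$ combined with the ergodicity of random interlacements under $\IZ^2$-translations then produce an infinite cluster of good sites.

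To bound such bad $*$-clusters in the face of long-range dependence, I follow the cascading renormalization of Section~4 of \cite{Szni07a}. Fix an integer scale $\ell$ to be tuned, set $L_n = \ell^n$, and introduce cascade events $B_n(y) \subset \Omega$ with $B_0(y) = \cG^c_{y,u(d)}$, and with $B_{n+1}(y)$ requiring the existence of two sites $y_1, y_2 \in \IZ^2$ within $|\cdot|_\infty$-distance $L_{n+1}$ of $y$, at mutual $|\cdot|_\infty$-distance $\ge L_n/4$, such that both $B_n(y_1)$ and $B_n(y_2)$ occur. A standard combinatorial observation shows that the presence of an infinite $*$-cluster of bad sites containing $0$ forces $B_n(0)$ for every $n$, so it is enough to prove $\IP[B_n(0)] \to 0$.

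The core of the proof is the recursive estimate
\[
\IP[B_{n+1}(0)] \;\le\; c\,L_{n+1}^{4}\,\bigl(\IP[B_n(0)]^2 + \rho_n\bigr),
\]
in which the polynomial factor bounds the number of admissible pairs $(y_1,y_2)$, and the quadratic term comes from the independence of the restricted Poisson processes of interlacement trajectories that avoid the respective sub-regions. The coupling correction $\rho_n$ collects the contribution of trajectories entering both of the underlying sub-regions $K_1, K_2 \ss \IZ^d$ (unions of hypercubes $C_{y'}$ meeting the two scale-$n$ sub-cascades): by (\ref{1.31})--(\ref{1.33}) and (\ref{1.35}), the probability that such a trajectory appears by time $u$ is at most $2u\,\cE(K_1, K_2)$, while (\ref{1.10}) and (\ref{1.11}) give $\cE(K_1, K_2) \le c(u, d)\,L_n^{-(\wt d - 2)}$, which decays super-polynomially in $n$ once $\ell$ is fixed sufficiently large. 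Feeding the initial bound $\IP[B_0] \le c\,d^{-3}$ from (\ref{2.4}) into the recursion forces $\IP[B_n(0)] \to 0$ for all sufficiently large $d$, which yields the theorem.

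The main obstacle is precisely the decoupling step: formally splitting the cascade event into an independent-product piece and a residual coupling piece whose probability can be reabsorbed by capacity estimates. This is where random interlacements differ qualitatively from Bernoulli percolation (and where \cite{LiggSchoStac97}, \cite{Pisz96} cannot be applied), and it is also the point at which the $d^{-3}$ decay in (\ref{2.4}) must be spent to beat the combinatorial and geometric factors that accumulate through the recursion.
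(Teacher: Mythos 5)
Your overall strategy is the same as the paper's: reduce to site percolation of the good events on $\IZ^2$, and control $*$-clusters of bad sites by a renormalization in the spirit of Section 4 of \cite{Szni07a}, decoupling the two sub-events by splitting the Poisson process of interlacement trajectories and absorbing the cross term into a mutual-energy bound via (\ref{1.33}) and (\ref{1.11}). However, the quantitative setup as you describe it does not close, for two concrete reasons.

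First, the scales. With $L_n=\ell^n$ and $\ell$ a fixed constant, the first cascade step asks for decoupling of two hypercube unions at $\IZ^d$-distance of order $\ell$ (or even $L_0/4<1$ as literally written), where the bound $g(x)\le (c\,d/|x|_1)^{\wt d-2}$ is useless: the prefactor $u\,|K_1|\,|K_2|\le c\,u\,(L_n^2 2^d)^2$ is only beaten when the separation exceeds $c'd$. So $\ell$ (equivalently the base scale) must grow linearly in $d$; this is why the paper takes $L_0=[c_1 d]+1$ in (\ref{2.37}) and why it first reduces to $u(d)\le d$ in (\ref{2.6}) so that the factor $u$ in $\ve_n$ stays polynomial. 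Second, and more seriously, the combinatorial factor. With a per-step factor $c\,L_{n+1}^4$ and the seed $\IP[B_0]\le c\,d^{-3}$ from (\ref{2.4}), the recursion diverges: already $\IP[B_1]\le c\,d^4\cdot d^{-6}=c\,d^{-2}$ and $\IP[B_2]\le c\,d^8\cdot d^{-4}=c\,d^{4}$. Even the more favorable count over pairs of scale-$n$ sub-boxes gives a factor $(L_{n+1}/L_n)^4=\ell^4\sim d^4$ per step, for which the iteration merely stalls at a constant. The hypothesis $d^3\,\IP[\cG^c_{u(d)}]$ bounded is tuned to a per-step factor of order $\ell_n^2$; the paper achieves this by using crossing events from $\cD_m$ to $\wt{\cV}_m$, so that the union in (\ref{2.18}) runs only over $O(\ell_n)$ sub-boxes touching $\partial\cD_m$ times $O(\ell_n)$ sub-boxes at half-distance, and by choosing $\ell_n=100[L_n^{a}]$ with $a=1/100$, so that the normalization $b_n=c_0\ell_n^2 q_n$ satisfies $b_{n+1}\le (\ell_{n+1}/\ell_n)^2 b_n^2+c(\ell_{n+1}\ell_n)^2\ve_n$ and the seeding condition $b_0\le L_0^{-1/2}$ amounts to $d^{2+2a+1/2}\,\IP[\cG^c_u]\to 0$, which is exactly what exponent $3$ in (\ref{2.4}) delivers. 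You would need either to redesign your cascade so the per-step entropy is $O(\ell_n^2)$ with $\ell_n$ a small power of $L_n$, or to strengthen the hypothesis to roughly $d^{8}\,\IP[\cG^c_{u(d)}]$ bounded, which is not what the theorem assumes.
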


\begin{proof}
We first observe that for values of $d$ for which $u(d) \ge d$ holds, one has 
\begin{equation*}
\IP [\cG^c_{u(d)}] \ge \IP[\cV^{u(d)} \cap C = \emptyset] \stackrel{(\ref{1.26}),(\ref{1.39})}{\ge} 1 - |C| \,e^{-\frac{u(d)}{g(0)}} \ge 1 - 2^d \,e^{-\frac{d}{g(0)}} \stackrel{\rm (\ref{1.9})}{\longrightarrow} 1, \;\mbox{as $d \r \infty$} \,.
\end{equation*}

\n
In view of (\ref{2.4}) it follows that $u(d) \ge d$ occurs at most finitely many times. As a result we can assume without loss of generality that
\begin{equation}\label{2.6}
u(d) \le d, \; \mbox{for all $d \ge 3$} \,.
\end{equation}

\n
From now on we write $u$ in place of $u(d)$ for simplicity. From the discussion below (\ref{2.3}) and (\ref{0.2}), see also Remark \ref{rem2.4}, we see that the claim (\ref{2.5}) will follow once we show that:
\begin{equation}\label{2.7}
\mbox{for $d \ge d_0$, $\IP[\{y \in \IZ^2; \omega \in \cG_{y,u}\}$ has an infinite connected component$] > 0$}\,.
\end{equation}

\n
We will prove (\ref{2.7}) with an adaptation of the renormalization scheme of Section 4 of \cite{Szni07a}. We thus consider

\vspace{-4ex}
\begin{equation}\label{2.8}
a = \mbox{\f $\dis\frac{1}{100}$}, \;\;L_0 > 1, \; \mbox{integer},
\end{equation}
and introduce a sequence of length scales $L_n$, $n \ge 0$, via:
\begin{equation}\label{2.9}
L_{n+1} = \ell_n \,L_n, \; \ell_n = 100 [L^a_n] \;(\ge L_n^a), \;\mbox{(so that $L_n \ge L_0^{(1+a)^n}$, for $n \ge 0$)}\,.
\end{equation}

\n
We organize the collection $C_y, y \in \IZ^2$, in a hierarchical way. With this in mind, we let $L_0$ stand for the bottom scale, and $L_1 < L_2 < \dots$ stand for coarser and coarser scales. We define the set of labels at level $n \ge 0$:
\begin{equation}\label{2.10}
J_n = \{n\} \times \IZ^2\,.
\end{equation}

\n
To $m = (n,i) \in J_n$, we attach the $\IZ^2$-boxes
\begin{equation}\label{2.11}
\cD_m = [0,L_n)^2 + i\,L_n \subseteq \wt{\cD}_m = [-L_n, 2 L_n)^2 + i\,L_n \subseteq \IZ^2\,,
\end{equation}

\medskip\n
which in turn are used to define the $\IZ^d$-boxes, cf.~(\ref{2.1}),
\begin{equation}\label{2.12}
D_m = \bigcup\limits_{y \in \cD_m} C_y \subseteq \wt{D}_m = \bigcup\limits_{y \in \wt{\cD}_m} C_y\,.
\end{equation}
We also introduce a set of sites in $\wt{\cD}_m$ located near its boundary:
\begin{equation}\label{2.13}
\wt{\cV}_m = \partial_{\rm int}  [-L_n + 1, \,2L_n - 1)^2 + i \,L_n\,,
\end{equation}

\medskip\n
as well as the event, (see (\ref{2.3}) for the notation),
\begin{equation}\label{2.14}
\begin{split}
B_m  =  \{ & \omega \in \Omega; \;\mbox{there is a $*$-path in $\IZ^2$ from $\cD_m$ to $\wt{\cV}_m$, such that for}
\\
& \mbox{each $y$ in the path, $\omega \in \cG^c_{y,u}\}$}\,.
\end{split}
\end{equation}

\n
An important step in the proof of (\ref{2.7}) is to show that when $d$ is chosen large, and $L_0$ linearly growing with $d$, cf.~(\ref{2.37}), there is a rapid decay to zero of the probability
\begin{equation}\label{2.15}
q_n = \IP[B_m], \; \mbox{with $m \in J_n$ arbitrary},
\end{equation}
(due to translation invariance $q_n$ is well-defined).

\medskip
We will set-up a recurrence relation to bound $q_{n+1}$ in terms of $q_n$, for $n \ge 0$. For this purpose given $m \in J_{n+1}$, $n \ge 0$, we consider the set of labels at level $n$, for which the corresponding $\IZ^2$-box is ``at the boundary of $\cD_m$'':
\begin{equation}\label{2.16}
\cK_1 = \{\ov{m} \in J_n; \;\cD_{\ov{m}} \subseteq \cD_m \; \mbox{and some point of $\cD_{\ov{m}}$ neighbors $\IZ^2 \backslash \cD_m\}$}\,,
\end{equation}

\n
as well as the collection of labels at level $n$, for which the corresponding $\IZ^2$-box  contains some point at $|\cdot |_\infty$-distance $\frac{1}{2} \,L_{n+1}$ of $\cD_m$:
\begin{equation}\label{2.17}
\cK_2 = \Big\{\ov{m} \in J_n; \;\cD_{\ov m} \cap \{z \in \IZ^2; \;d_\infty (z,\cD_m) = \fr  \;L_{n+1}\} \not= \emptyset\Big\}\,.
\end{equation}

\psfragscanon
\begin{center}
\includegraphics[width=17cm]{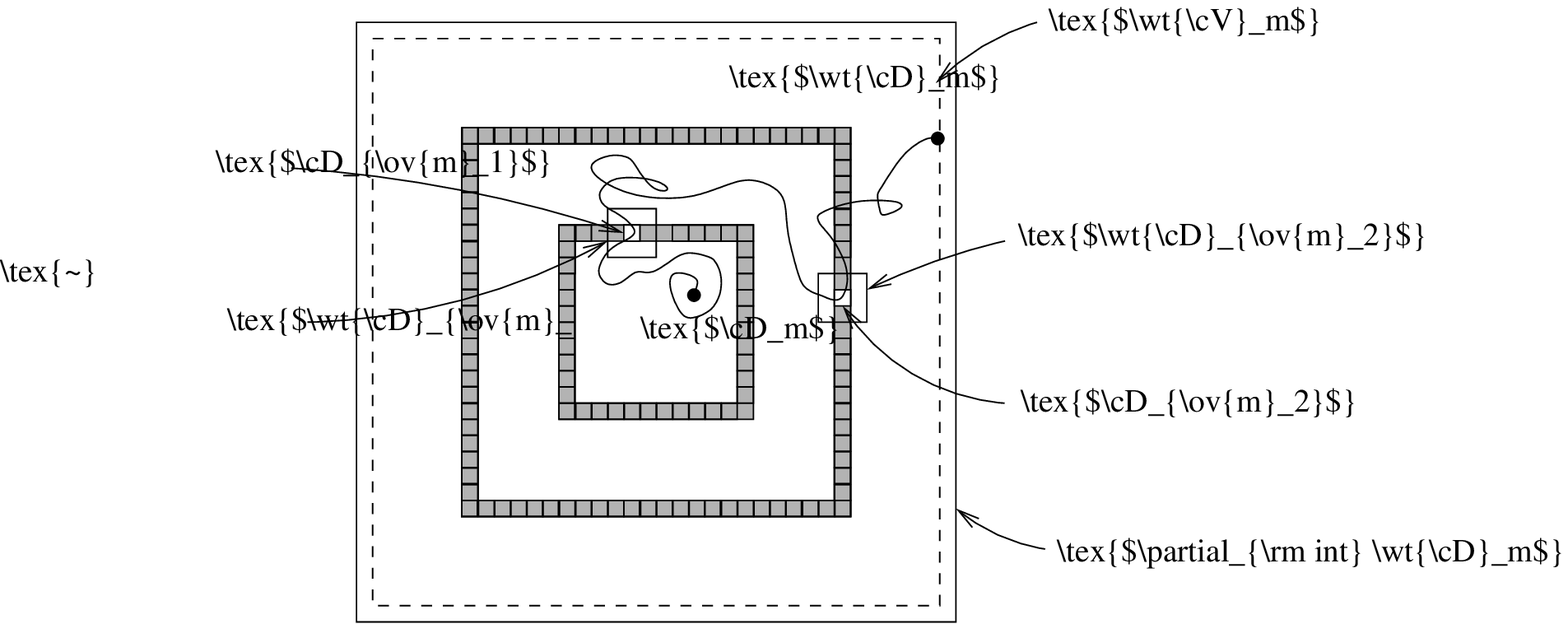}
\end{center}

\medskip
\begin{center}
\hspace{-10ex} \begin{tabular}{ll}
Fig.~1: & A schematic illustration of the event $B_m$. 
\\
&At each site $y$ in the the range of the $*$-path, 
\\
&which is drawn, $\cG^c_{y,u}$ occurs.
\end{tabular}
\end{center}

\medskip\n
Any $*$-path in $\IZ^2$ originating in $\cD_m$ and ending in $\wt{\cV}_m$ must go through some $\cD_{\ov{m}_1}$, $\ov{m}_1 \in \cK_1$, reach $\wt{\cV}_{\ov{m}_1}$, and then go through some $\cD_{\ov{m}_2}, \ov{m}_2 \in \cK_2$, and reach $\wt{\cV}_{\ov{m}_2}$. As a result we see that for $n \ge 0$ and $m \in J_{n+1}$ arbitrary, we have
\begin{equation}\label{2.18}
q_{n+1} \le \dsl_{\ov{m}_1 \in \cK_1, \ov{m}_2 \in \cK_2} \;\IP [B_{\ov{m}_1} \cap B_{\ov{m}_2}] \le c_0 \,\ell^2_n \;\sup\limits_{\ov{m}_1 \in \cK_1, \ov{m}_2 \in \cK_2} \IP[B_{\ov{m}_1}\cap B_{\ov{m}_2}] \,,
\end{equation}
with $c_0 \ge 1$, a suitable constant.

\medskip
Given $\ov{m}_1$ in $\cK_1$ and $\ov{m}_2$ in $\cK_2$ we now write
\begin{equation}\label{2.19}
V = \wt{D}_{\ov{m}_1} \cup \wt{D}_{\ov{m}_2} \subseteq \IZ^d\,,
\end{equation}

\n
and introduce the decomposition of $\mu_{V,u}$, (see (\ref{1.34}) for the notation),
\begin{align}
\mu_{V,u}  = & \delta_{1,1} + \delta_{1,2} + \delta_{2,1} + \delta_{2,2}, \;\mbox{where for $i,j$ distinct in $\{1,2\}$:}\label{2.20}
\\[1ex]
\delta_{i,j} & = 1\{X_0 \in \wt{D}_{\ov{m}_i}, \;H_{\wt{D}_{\ov{m}_j}} < \infty\} \;\mu_{V,u} \;\;\mbox{and} \nonumber
\\
\delta_{i,i} & = 1\{X_0 \in \wt{D}_{\ov{m}_i}, \;H_{\wt{D}_{\ov{m}_j}} = \infty\} \;\mu_{V,u} \,.\nonumber
\end{align}
Observe that thanks to (\ref{1.35}),
\begin{equation}\label{2.21}
\mbox{the $\delta_{i,j}$, $1 \le i,j \le 2$, are independent Poisson point processes on $W_+$},
\end{equation}

\n
with respective intensity measures $\zeta_{i,j}$, $1 \le i, j \le 2$, where for $1 \le i \not= j \le 2$, we set
\begin{equation}\label{2.22}
\begin{split}
\zeta_{i,j}(dw) & = u \,1\{X_0 \in \wt{D}_{\ov{m}_i}, \;H_{\wt{D}_{\ov{m}_j}} < \infty\} \,P_{e_V}(dw)\,,
\\
\zeta_{i,i}(dw) & = u \,1\{X_0 \in \wt{D}_{\ov{m}_i}, \;H_{\wt{D}_{\ov{m}_j}} = \infty\} \,P_{e_V}(dw)\,.
\end{split}
\end{equation}

\n
Given a random variable $\Lambda(\omega)$ on $(\Omega, \cA)$ with values in the set of finite point measures on  $W_+$, for $\ov{m} \in J_n$ we define:
\begin{equation}\label{2.23}
\begin{split}
B_{\ov{m}}(\Lambda)  = \{ & \o \in \Omega; \;\mbox{there is a $*$-path in $\IZ^2$ from $\wt{\cD}_{\ov{m}}$ to $\wt{\cV}_{\ov{m}}$, such that}
\\
& \mbox{for each $y$  in the path $\o \notin \cG^\Lambda_y\}$}\,,
\end{split}
\end{equation}

\medskip\n
where $\cG^\Lambda_y$ is defined analogously as in (\ref{2.3}), with $\big(\bigcup_{w \in {\rm Supp}\,\Lambda (\o)} w(\IN)\big)^c$ replacing $\cV^u(\o)$. Note that $B_{\ov{m}}(\Lambda)$ naturally increases with $\Lambda$.

\medskip
Due to the definition of $\wt{\cV}_{\ov{m}}$ in (\ref{2.13}), the event $B_{\ov{m}}$ only depends on the restriction $\cV^u(\o) \cap \wt{D}_{\ov{m}}$ of the vacant set at level $u$ to $\wt{D}_{\ov{m}}$. Thus with (\ref{1.38}), $B_{\ov{m}_i} = B_{\ov{m}_i} (\mu_{V,u})$, for $i = 1,2$. In addition due to the fact that $(\bigcup_{w \in {\rm Supp} \,\delta_{i,i}} w(\IN) \cap \wt{D}_{\ov{m}_j}) = \emptyset$, for $1 \le i \not= j \le 2$, we find that
\begin{equation}\label{2.24}
\begin{split}
B_{\ov{m}_1} = B_{\ov{m}_1} (\mu_{V,u}) & = B_{\ov{m}_1} (\delta_{1,1} + \delta_{1,2} + \delta_{2,1})\,,
\\
B_{\ov{m}_2} = B_{\ov{m}_2} (\mu_{V,u}) & = B_{\ov{m}_2} (\delta_{2,2} + \delta_{2,1} + \delta_{1,2})\,.
\end{split}
\end{equation}
It then follows that
\begin{equation}\label{2.25}
\begin{array}{l}
\IP [B_{\ov{m}_1} \cap B_{\ov{m}_2}] \le \IP [B_{\ov{m}_1} (\delta_{1,1}) \cap B_{\ov{m}_2} (\delta_{2,2}), \,\delta_{1,2} = \delta_{2,1} = 0] \;+
\\
\IP[\delta_{1,2}  \;\mbox{or} \; \delta_{2,1} \not= 0] \stackrel{(\ref{2.21})}{\le} \IP [B_{\ov{m}_1} (\delta_{1,1}) ] \,\IP[B_{\ov{m}_2} (\delta_{2,2})] + \IP[\delta_{1,2} \not= 0] + \IP[\delta_{2,1} \not= 0]
\\[1ex]
\le \IP[B_{\ov{m}_1} (\mu_{V,u})] \,\IP[B_{\ov{m}_2} (\mu_{V,u})] + \zeta_{1,2}  (W_+) + \zeta_{2,1}(W_+)
\\[1ex]
\le q_n^2 + \ve_n, \;\mbox{where in the notation of (\ref{1.33})}
\end{array}
\end{equation}
\begin{equation}\label{2.26}
\ve_n = 2 \,u \, \sup\limits_{\ov{m}_1 \in \cK_1, \ov{m}_2 \in \cK_2}\, \cE(\wt{D}_{\ov{m}_1}, \wt{D}_{\ov{m}_2})\,,
\end{equation}

\medskip\n
and we used a similar calculation as in (\ref{1.33}) to bound $\zeta_{1,2}(W_+)$ and $\zeta_{2,1}(W_+)$.

\medskip
Coming back to (\ref{2.18}) we thus find that for $n \ge 0$,
\begin{equation}\label{2.27}
q_{n+1} \le c_0 \,\ell^2_n (q^2_n + \ve_n)\,.
\end{equation}
Setting for $n \ge 0$,
\begin{equation}\label{2.28}
b_n = c_0 \,\ell_n^2 \,q_n, \;\mbox{(where we recall that $c_0 \ge 1$, see (\ref{2.18}))},
\end{equation}
we find that for $n \ge 0$,
\begin{equation}\label{2.29}
b_{n+1} = c_0\,\ell^2_{n+1} \, q_{n+1} \le c_0^2 \,\ell_{n+1}^2 \,\ell^2_n (q_n^2 + \ve_n) = \Big(\dis\frac{\ell_{n+1}}{\ell_n}\Big)^2 \,b_n^2 + c(\ell_{n+1} \,\ell_n)^2 \ve_n\,.
\end{equation}

\medskip\n
The next lemma explains how we will use the above recurrence relation.

\begin{lemma}\label{lem2.3} (under {\rm (\ref{2.6})})
\begin{equation}\label{2.30}
\begin{array}{l}
\mbox{For $L_0 \ge c$, when for all $n \ge 0, \ve_n \le L_n^{-1}$, then $b_0 \le L_0^{-\frac{1}{2}}$} 
\\
\mbox{implies that $b_n \le L_n^{-\frac{1}{2}}$, for all $n \ge 0$}\,.
\end{array}
\end{equation}
\begin{equation}\label{2.31}
\hspace{-3ex}\mbox{For $d \ge c$, $L_0 \ge c_1\,d$, one has $\ve_n \le L_n^{-1}$, for all $n \ge 0$}\,.
\end{equation}
\end{lemma}

\begin{proof}
We begin with the proof of (\ref{2.30}). We first note that for $n \ge 0$,
\begin{equation}\label{2.32}
\begin{array}{rl}
{\rm i)} &\quad  \dis\frac{\ell_{n+1}}{\ell_n} \stackrel{(\ref{2.9})}{\le} 2 \;\dis\frac{L^a_{n+1}}{L^a_n} \le 2(100 L_n^{1+a})^a \;L^{-a}_n \stackrel{(\ref{2.8})}{\le} 200 L_n^{a^2}\,,
\\[2ex]
{\rm ii)} &\quad  (\ell_{n+1} \,\ell_n)^2 \le c \, L_{n+1}^{2a} \;L^{2a}_n \le c \,L_n^{4a + 2a^2}\,.
\end{array}
\end{equation}

\medskip\n
As a result when $b_n \le L_n^{-\frac{1}{2}}$ holds we find that:
\begin{equation}\label{2.33}
\begin{array}{lcl}
b_{n+1} & \hspace{-1ex} \stackrel{(\ref{2.29}), (\ref{2.32})}{\le} & \hspace{-3ex} c \,L_n^{4a + 2a^2} (L_n^{-1} + L_n^{-1}) = c\,L^{-\frac{1}{2}}_{n+1} \;L_{n+1}^{\frac{1}{2}} \;L_n^{4a + 2a^2 -1}
\\[1ex]
& \hspace{-1ex} \le & \hspace{-3ex} L_{n+1}^{-\frac{1}{2}} \;c^\prime \,L_n^{\frac{a}{2} + \frac{1}{2} + 4a + 2a^2 - 1} \stackrel{(\ref{2.8})}{\le} L_{n+1}^{-\frac{1}{2}}\;c^\prime \,L_{n}^{-\frac{1}{4}} \le L_{n+1}^{-\frac{1}{2}}, \;\mbox{when $L_0 \ge c$}\,.
\end{array}
\end{equation}

\medskip\n
The claim (\ref{2.30}) follows. We then turn to the proof of (\ref{2.31}). We find with (\ref{2.26}), and (\ref{1.33}), (\ref{1.24}), that for $n \ge 0$,
\begin{equation}\label{2.34}
\ve_n \le 2u \,|\wt{D}_{\ov{m}}|^2 \;\ \sup\limits_{\ov{m}_i \in \cK_i, i=1,2} \;\sup\limits_{x_i \in \wt{D}_{\ov{m}_i}, i=1,2} g(x_1,x_2), \;\mbox{with $\ov{m} \in J_n$ arbitrary}\,.
\end{equation}

\n
Observe that for $\ov{m}$, $x_1,x_2$ as above we have
\begin{equation*}
|\wt{D}_{\ov{m}}| \le 9 \,L^2_n \,2^d, \;\mbox{and} \;|x_1 - x_2|_1 \ge \frvier\; L_{n+1} \,.
\end{equation*}

\n
Thus with (\ref{2.6}) we know that $u \le d ( \le 2^d)$, and when $d \ge 5$, due to (\ref{1.11}), we have
\begin{equation}\label{2.35}
\ve_n \le c\,d \,L_n^4 \,2^{2d} (c^\prime \, d / L_{n+1})^{\frac{d}{2} -2} \le c\,L^4_n \,(\wt{c}\,d / L_{n+1})^{(\frac{d}{2} -2)} \le c\,L^4_n \,\ell_n^{-(\frac{d}{2} - 2)} ,
\end{equation}

\n
when $L_0 \ge \wt{c} \,d$. As a result we find with (\ref{2.9}) that
\begin{equation}\label{2.36}
\ve_n \le c \,L_n^4 \,L_n^{-6} \le L_n^{-1}\,,
\end{equation}

\n
when $(\frac{d}{2} - 2) \ge \frac{6}{a} \; (\stackrel{(\ref{2.8})}{=} 600)$, and $L_0 \ge c \vee (\wt{c} \,d)$. The claim (\ref{2.31}) follows.
\end{proof}

We will now rely on (\ref{2.4}) to establish that for large $d$,
\begin{equation}\label{2.37}
b_0 \le L_0^{-\frac{1}{2}}, \;\mbox{when} \;L_0 = [c_1\,d] + 1\,.
\end{equation}

\n
With the above lemma it will then follow that for large $d$, with this choice of $L_0$, one has
\begin{equation}\label{2.38}
\ell^2_n \,q_n \stackrel{(\ref{2.28})}{\le} b_n \le L_n^{-\frac{1}{2}}, \;\mbox{for all $n \ge 0$} \,.
\end{equation}
To prove (\ref{2.37}) we note that
\begin{equation}\label{2.39}
b_0 = c_0 (100 [L_0^a])^2 \,q_0 \le c\,L_0^{2a} \,\IP \Big[\bigcup\limits_{y \in [-L_0,2L_0)^2} \,\cG^c_{y,u}\Big] \le c\,L_0^{2a + 2} \IP[\cG_u^c]\,.
\end{equation}

\n
As a result we see that when $d$ tends to infinity
\begin{equation*}
L_0^{\frac{1}{2}} \;b_0 \le c\,L_0^{2a + 2,5} \,\IP[\cG_u^c] \stackrel{(\ref{2.4})}{\longrightarrow} 0\,,
\end{equation*}

\n
and the claim (\ref{2.37}) follows. Thus (\ref{2.38}) holds for large $d$. For such $d$ and $L_0$ as in (\ref{2.37}), we have
\begin{equation*}
\begin{array}{l}
\mbox{$\IP[0$ belongs to a finite connected component of $\{y \in \IZ^2; \;\o \in \cG_{y,u}\}] \le$}
\\[1ex]
 \IP [\bigcup\limits_{|y|_\infty < L_0} \cG^c_{y,u}] + \IP [\mbox{there is a $*$-circuit in $\IZ^2 \backslash (-L_0,L_0)^2$ containing $0$ in the interior,}
\\[2ex]
\mbox{such that for each $y$ in the circuit $\cG^c_{y,u}$ occurs$] \le cL_0^2 \,\IP[\cG^c_u] \; +$}
\\[1ex]
\mbox{$\dsl_{n \ge 0} \,\IP[$there is a $*$-circuit in $\IZ^2 \backslash (-L_0 \,L_0)^2$ containing $0$ in  the interior,}
\\[2ex]
\mbox{passing through $[L_n\,L_{n+1}) e_1$, such that $\cG^c_{y,u}$ occurs for each $y$ in the circuit$]$.}
\end{array}
\end{equation*}

\medskip\n
Aa a result we see that 
\begin{equation}\label{2.40}
\begin{array}{l}
\IP[0  \;\mbox{belongs to a finite connected component of}\; \{y \in \IZ^2; \,\omega \in \cG_{y,u}\}] \le
\\[1ex]
c\,d^2 \,\IP[\cG^c_u] \;+ \dsl_{n \ge 0} \;\dsl_{m \in J_n \atop \cD_m \cap [L_n, L_{n+1})e_1 \not= \emptyset} 
\IP [B_m] \stackrel{(\ref{2.38})}{\le} c\,d^2 \,\IP[\cG^c_u]
 + \dsl_{n \ge 0} \ell_n \,L_n^{-\frac{1}{2}} \,\ell_n^{-2} \stackrel{(\ref{2.9})}{\le}
\\[1ex]
c\,d^2 \,\IP[\cG^c_u] + \dsl_{n \ge 0} (c_1\,d)^{-\frac{1}{2} (1 + a)^n} \stackrel{(\ref{2.4})}{<} \frac{1}{2}, \;\mbox{for sufficiently large $d$} \,.
\end{array}
\end{equation}

\n
In particular this shows (\ref{2.7}), and thus concludes the proof of Theorem \ref{theo2.2}.
\end{proof}

\begin{remark}\label{rem2.4} \rm 
If $t_x, x \in \IZ^d$, stands for the canonical shift on $\{0,1\}^{\IZ^d}$, it follows from (2.6) of \cite{Szni07a}, that restricting $x$ to $\IZ^2$, one obtains an ergodic measure preserving flow on $(\{0,1\}^{\IZ^d}, \cY, Q_u)$, with $\cY$ the canonical $\sigma$-algebra on $\{0,1\}^{\IZ^d}$. Thus (\ref{2.7}) not only shows that for large $d$, with full $\IP$-probability $\cV^{u(d)}$ percolates but even that
\begin{equation}\label{2.41}
\begin{array}{l}
\mbox{when (\ref{2.4}) holds, for $d \ge d_0$, $\IP$-a.s., $\cV^{u(d)} \cap (\IZ^2 + C)$} 
\\
\mbox{contains an infinite connected component.}
\end{array}
\end{equation}
Thus for large $d$, $\cV^{u(d)}$ percolates in $\IZ^2 + C$. \hfill $\square$
\end{remark}

\section{Growing trees}
\setcounter{equation}{0}

The objective of the present and the next sections is to show that (\ref{2.4}) holds for sequences $u(d)$ equivalent to $(1-3\ve) \log d$, with $\ve$ an arbitrarily small positive number. The main Theorem \ref{theo0.1} then follows from Theorem \ref{theo2.2}. In this section, as a step towards this goal, we introduce $u_0 \sim (1-\ve) \log d$, see (\ref{3.2}) below, and show that for large $d$, with high probability, most vertices in $C$ have some neighbor belonging to a substantial connected component of $\cV^{u_0} \cap C$, containing at least $L \ge c(\ve) \,d^{c^\prime \log \frac{1}{\ve}}$ vertices, with $c^\prime$ a large constant, see (\ref{3.5}). The main result of this section appears in Theorem \ref{theo3.1}. Its proof involves the construction of certain large trees in $\cV^{u_0} \cap C$. These trees are obtained by trimming larger trees which show up as part of a ``first approximation'' of the vacant set, when $d$ is large.

\medskip
Throughout the remainder of this section and the next section, constants will implicitly depend on the parameter
\begin{equation}\label{3.1}
\ve \in \Big(0, \mbox{\f $\dis\frac{1}{10}$}\Big)\,.
\end{equation}

\n
We then define the level $u_0$ via, see (\ref{1.6}) for the notation,
\begin{equation}\label{3.2}
u_0 = (1-\ve) \,g(0) \log d\,,
\end{equation}
(we recall that $g(0)$ tends to $1$ as $d$ goes to infinity, cf.~(\ref{1.9})), as well as the constant
\begin{equation}\label{3.3}
\alpha = 15 + e + \log 500 / \log 3\,,
\end{equation}

\n
which is such that when $N$ is a Poisson variable with parameter $\frac{u_0}{g(0)}$, (think of the number of trajectories modulo time-shift with label at most $u_0$ going through a given site $x$ in $\IZ^d$, cf.~(\ref{1.26}), (\ref{1.34}), (\ref{1.35})), one has
\begin{equation}\label{3.4}
P[N \ge \alpha \, \log d] \le \exp\{ - \alpha \,\log d + (1-\ve) \log d(e-1)\} \le d^{-14} / 500\,.
\end{equation}

\n
We aim at constructing trees of depth $\ell -1$ in $\cV^{u_0} \cap C$ which contain at least $L$ sites, where $\ell$ and $L$ are determined through:
\begin{equation}\label{3.5}
\ell = 3 + \Big[\ve^{-1} (7 + 2 \alpha \,\log \,\mbox{\f $\dis\frac{1}{\ve}$}\Big)\Big], \;\mbox{and} \; L = \Big[ \fr(d^\ve / \ell)^{\ell - 1}\Big]\,.
\end{equation}
In what follows the expression {\it substantial component} of $C$ will stand for a connected component of $\cV^{u_0} \cap C$ with at least $L$ points. Our main result is
\begin{theorem}\label{theo3.1}
\begin{equation}\label{3.6}
\begin{array}{l}
\mbox{$\IP[$at most $|C|\,e^{-c_2 d^{\ve/2}}$ points of $C$ have no neighbor in a}
\\
\mbox{substantial component$] \ge 1-c\,e^{-cd^{\ve/2}}$}.
\end{array}
\end{equation}
\end{theorem}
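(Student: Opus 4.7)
The plan is to establish, for every fixed $x\in C$, the pointwise bound
\begin{equation*}
\IP[x \text{ has no neighbor in a substantial component}] \le \exp(-c\,d^{\varepsilon/2}),
\end{equation*}
and then deduce (\ref{3.6}) by Markov's inequality. Since $C=\{0,1\}^d$ is invariant under permutations and coordinate-reflections, this probability does not depend on $x$, so Markov applied to $N:=\#\{x\in C:x\text{ bad}\}$ yields $\IP[N\ge|C|\,e^{-c_2 d^{\varepsilon/2}}]\le e^{-(c-c_2)d^{\varepsilon/2}}$; taking $c_2<c$ then gives (\ref{3.6}).

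To bound the pointwise probability I would fix $x\in C$ and try to grow inductively a random rooted tree $T_x\subset C$ of depth $\ell-1$, with root either $x$ itself if $x\in\cV^{u_0}$, or else an arbitrarily chosen neighbor of $x$ in $\cV^{u_0}$ if $x\in\cI^{u_0}$. At every already-constructed level-$k$ node $y$, the candidate children are the $|\cdot|_1$-neighbors of $y$ in $C$ obtained by flipping coordinates not yet used on the root-to-$y$ path; one keeps those lying in $\cV^{u_0}$, with the target of retaining at least $d^\varepsilon/\ell$ accepted children at each node. The target is motivated by the mean-field computation: by (\ref{1.39}) and (\ref{1.26}), each candidate is in $\cV^{u_0}$ with probability $e^{-u_0/g(0)}=d^{-(1-\varepsilon)}$, so the mean number of accepted children out of the $\ge d-\ell$ candidates is of order $d^\varepsilon$. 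If the construction reaches depth $\ell-1$ with the required branching, then $|T_x|\ge(d^\varepsilon/\ell)^{\ell-1}\ge 2L$; the entire $T_x$ lies in $\cV^{u_0}\cap C$ and is connected (tree edges are nearest-neighbor), so it sits in a single connected component of $\cV^{u_0}\cap C$ of size at least $L$, i.e.\ a substantial component, and $x$ has a neighbor in this component -- either itself when the root is $x$, or the root when the root was chosen as a vacant neighbor of $x$.

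The argument thus reduces to showing that the exploration survives all $\ell-1$ generations with the required branching, with failure probability at most $\exp(-c\,d^{\varepsilon/2})$; this is the content of the upcoming Lemma \ref{lem3.3}, and is the main technical work. The chief obstacle is that the occupancy variables $\{y'\in\cV^{u_0}\}$ for the different candidate children of a given node are not independent under $\IP$, because a single trajectory in the Poisson point process $\mu_{K,u_0}$ of (\ref{1.34})--(\ref{1.35}) can sweep through several of them at once. I would address this dependence by exploiting the explicit Poisson structure together with the sharp high-dimensional Green-function bound $g(y,y')\le c(k)\,d^{-k}$ at $|\cdot|_1$-distance $k$ from (\ref{1.10}); this makes the mutual energies $\cE(\cdot,\cdot)$ of (\ref{1.33}) between distinct candidates of the same node small (of order $d^{-1}$ at distance $2$), so that the expected number of trajectories jointly visiting two or more candidates of a given node is negligible compared to the expected number hitting only one. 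After discarding these few ``collapsing'' trajectories the residual occupancies can be stochastically dominated below by independent Bernoullis of parameter $\approx d^{-(1-\varepsilon)}$, to which a standard Chernoff/Poisson concentration bound applies to yield failure probability $\exp(-c\,d^{\varepsilon/2})$ at each node, small enough to absorb a union bound over the at most $(d^\varepsilon)^{\ell-1}$ nodes of $T_x$. The same estimate also covers the edge case where $x$ itself fails to have any vacant neighbor, which is precisely the statement that the level-$1$ branching from $x$ is zero.
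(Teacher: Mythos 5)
Your overall architecture coincides with the paper's: reduce to a pointwise bound at a fixed vertex by symmetry and Chebyshev/Markov, then grow a tree of depth $\ell-1$ with branching $\approx d^{\ve}/\ell$ inside $\cV^{u_0}\cap C$, so that its $\ge 2L$ vertices form (part of) a substantial component neighboring the given vertex. The gap is in the one place where all the work lies: your claim that, after discarding the few trajectories hitting two or more candidates of a node, ``the residual occupancies can be stochastically dominated below by independent Bernoullis'' of parameter $\approx d^{-(1-\ve)}$. In a sequential exploration of $\cV^{u_0}$ you condition at each step on the full history, which includes the event that certain previously examined sites lie in $\cI^{u_0}$. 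Conditioning on a nearby site being occupied forces at least one trajectory through it, and that trajectory (together with the $\approx\log d$ others through that site) has a non-negligible chance of also covering the new candidates; conditioning on sites being vacant tilts the law in the other direction. Neither effect is controlled by the pairwise mutual energies of candidates of a single node, and no FKG-type or Liggett--Schonmann--Stacey domination is available here precisely because of the long-range dependence of interlacements. Making your domination claim rigorous would require the same kind of conditional Poisson computation and energy estimates that constitute the paper's Lemma \ref{lem3.3} (see (\ref{3.34})--(\ref{3.48})), which you have deferred rather than supplied; as stated, the step would not survive scrutiny.

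The paper circumvents this by a different decoupling device that you may want to compare with: it partitions $W^*$ into the sets $W^*_x$ of trajectories whose first point of minimal $|\cdot|_1$-norm is $x$, so that the occupation numbers $n_x=\o(W^*_x\times[0,u_0])$ are genuinely independent Poisson variables (\ref{3.12}). The tree $\cT^0$ is grown using only the events $\{n_x=0\}$ (an honest Galton--Watson-type construction, Lemma \ref{lem3.2}), and the discrepancy between $\{n_x=0\}$ and $\{x\in\cV^{u_0}\}$ --- i.e.\ the damage done by trajectories visiting $x$ whose minimal point lies elsewhere --- is bounded separately and then removed by pruning (\ref{3.50})--(\ref{3.52}). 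Two further small points: your rule ``flip a coordinate not yet used on the root-to-node path'' does not prevent distinct branches from producing the same vertex (e.g.\ $e_1+e_2+e_3$ is a child of both $e_1+e_2$ and $e_1+e_3$), which spoils the vertex count; the paper's disjoint coordinate blocks $I_1,\dots,I_\ell$ in (\ref{3.8}) are what guarantee an injective tree. And the mutual energy of two candidates at $|\cdot|_1$-distance $2$ is $O(d^{-2})$ by (\ref{1.10}), not $O(d^{-1})$, though this does not affect your heuristic count.
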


\begin{proof}
With translation invariance, symmetry, and Chebyshev's inequality, it suffices to prove that:
\begin{equation}\label{3.7}
\mbox{$\IP[$no neighbor of $0$ belongs to a substantial component$] \le c \,e^{-cd^{\ve/2}}$}\,.
\end{equation}

\n
From now on, without loss of generality we assume $d > \ell$, and consider $I_1,\dots , I_\ell$, the pairwise disjoint consecutive intervals of $\{1,\dots , d\}$ with length $[\frac{d}{\ell}]$. We introduce a deterministic tree rooted at the origin through the formula
\begin{equation}\label{3.8}
\begin{split}
\cT =\{ & x \in B_1(0,\ell); \;x = e_{i_1} + \dots + e_{i_k}, \;\mbox{for some} 
\\
& 0 \le k \le \ell, \;i_1 \in I_1,\dots, i_k \in I_k\} \subset C .
\end{split}
\end{equation}

\n
For $0 \le k \le \ell$, the $k$-th generation of $\cT$ is
\begin{equation}\label{3.9}
\cT_k = \cT \cap S_1 (0,k) \;\mbox{and contains $[\frac{d}{\ell}]^k$ points}\,.
\end{equation}

\n
We will now construct a random subtree $\cT^0$ of $\cT$, which is approximately a supercritical Galton-Watson tree of depth $\ell$. As we will see in Lemma \ref{lem3.3}, for large $d$, $\cT^0 \backslash \{0\}$ essentially lies in $\cV^{u_0} \cap C$. In order to define this random tree we introduce for $x$ in $\IZ^d$, the set $W^*_x \subseteq W^*$, (not to be confused with $W^*_{\{x\}}$, see~(\ref{1.30})), via
\begin{equation}\label{3.10}
W^*_x = \pi^* (\{w \in W; \;\mbox{the first $w(n), n \in \IZ$, with minimal $|\cdot |_1$-norm is $x$\})}\,,
\end{equation}
as well as the random variables
\begin{equation}\label{3.11}
n_x(\omega) = \omega(W^*_x \times [0,u_0])\,.
\end{equation}

\n
Analogously defined objects are very useful in the discussion of the percolative properties of the vacant set of random interlacements on trees, see Section 5 of Teixeira \cite{Teix08b}.

\medskip
The sets $W^*_x$, $x \in \IZ^d$, constitute a measurable partition of $W^*$, and therefore
\begin{equation}\label{3.12}
\begin{array}{l}
\mbox{$n_x, x \in \IZ^d$, are independent Poisson variables with}
\\
\mbox{respective parameters $u_0 \,\nu(W_x^*)$}\,.
\end{array}
\end{equation}

\n
In view of (\ref{1.31}), (\ref{1.32}) and (\ref{1.24}), we have the identity
\begin{equation}\label{3.13}
\nu(W_x^*) = P_x[|X_n|_1 > |x|_1, \;\mbox{for all $n \ge 1] \;P_x [|X_n|_1 \ge |x|_1$ for all $n \ge 0]$}\,.
\end{equation}

\n
As a result of (\ref{1.12}), we also find that
\begin{equation}\label{3.14}
1 -\mbox{\f $\dis\frac{c}{d}$} \le \nu(W^*_x) \le 1, \;\mbox{for $x \in B_1(0,\ell)$}\,.
\end{equation}

\n
We can now introduce the above mentioned random subtree of $\cT$, as the component of $0$ in $\cT$ of the sites $x$ with vanishing occupation number $n_x$:
\begin{equation}\label{3.15}
\begin{split}
\cT^0 = \{0\} \cup \{& x \in \cT \backslash \{0\}; \; n_{x_1} = \dots = n_{x_k} = 0, \;\mbox{for $x_0 = 0, \dots , x_k = x$},
\\
&\mbox{the canonical path from $0$ to $x$ in $\cT\}$}\,.
\end{split}
\end{equation}

\n
We will now see that $\cT^0$ is typically large, and for this purpose we define the variables, (which in view of (\ref{3.12}) are in fact independent):
\begin{equation}\label{3.16}
D(z) = \dsl_{i \in I_{|z|_1 + 1}} 1\{n_{z + e_i} = 0\}, \;\mbox{for $z \in \cT \cap B_1(0,\ell-1)$} \,.
\end{equation}

\n
Due to (\ref{3.12}), (\ref{3.14}), when $d$ is large the distribution of $D(z)$ is approximately binomial with parameters $n = [\frac{d}{\ell}]$ and $p = \exp\{-u_0 (1 - O(\frac{1}{d}))\} \stackrel{(\ref{1.9}),(\ref{3.2})}{=} d^{-(1-\ve)}(1 + O(\frac{\log d}{d}))$. The next lemma contains the precise estimates that we will need.

\begin{lemma}\label{lem3.2}
\begin{equation}\label{3.17}
\begin{array}{l}
\IP[\cD] \ge 1 - c \,e^{-cd^{\ve}}, \;\mbox{where}
\\
\cD = \Big\{\mbox{for all} \; z \in \cT \cap B_1(0,\ell-1), \;e^{-\frac{10^{-3}}{\ell}} \;\dis\frac{d^\ve}{\ell} \le D(z) \le e^{\frac{10^{-3}}{\ell}} \; \dis\frac{d^\ve}{\ell}\Big\}\,.
\end{array}
\end{equation}
\end{lemma}

\begin{proof}
For $z$ as above, $D(z)$ is distributed, thanks to (\ref{3.12}), as the sum of $[\frac{d}{\ell}]$ independent Bernoulli variables with parameters $e^{-u_0 \nu(W^*_{z + e_i})}$, $i \in I_{|z|_1 + 1}$. With (\ref{3.14}), (\ref{1.9}) and an exponential Chebychev bound we see that for $\lambda > 0$,
\begin{equation}\label{3.18}
\begin{array}{l}
\IP\Big[D(z) > e^{\frac{10^{-3}}{\ell}} \;\mbox{\f $\dis\frac{d^\ve}{\ell}$}\Big] \le 
\\
\exp\Big\{ - \lambda \,e^{\frac{10^{-3}}{\ell}} \;\mbox{\f $\dis\frac{d^\ve}{\ell}$} + \Big[\mbox{\f $\dis\frac{d}{\ell}$}\Big] \,\log \Big(1 + (e^\lambda - 1) \dis\frac{(1+c(\log d)/ d)}{d^{1-\ve}}\Big)\Big\}\,.
\end{array}
\end{equation}

\n
Picking $\lambda = c$ small, we find that the left-hand side of (\ref{3.18}) is smaller than $\exp\{ -c \,d^\ve\}$, when $d \ge c$. In a similar fashion we see that for $\lambda > 0$,
\begin{equation}\label{3.19}
\begin{array}{l}
\IP\Big[D(z) < e^{-\frac{10^{-3}}{\ell}} \;\mbox{\f $\dis\frac{d^\ve}{\ell}$}\Big] \le
\\
\exp\Big\{  \lambda \,e^{-\frac{10^{-3}}{\ell}} \;\mbox{\f $\dis\frac{d^\ve}{\ell}$} + \Big[\mbox{\f $\dis\frac{d}{\ell}$}\Big] \,\log \Big(1 - (1- e^{-\lambda}) \dis\frac{(1-c(\log d)/ d)}{d^{1-\ve}}\Big)\Big\}\,,
\end{array}
\end{equation}

\n
and choosing $\lambda = c$ small, we see that the left-hand side of (\ref{3.19}) is at most $\exp\{-c\,d^\ve\}$, for $d \ge c$. Since $|\cT| \le |B_1(0,\ell)| \le (2d)^{\ell + 1}$, the claim (\ref{3.17}) readily follows.
\end{proof}

We thus see that on the typical event $\cD$, the random tree $\cT^0$ is sizeable. However it may well intersect $\cI^{u_0}$. For this reason we will needs to prune $\cT^0$ in order to construct a substantial component of $\cV^{u_0} \cap C$ abutting $0$. With this in mind, we consider the various possible ``shapes'' of $\cT^0$ compatible with $\cD$, i.e. the collection $\cS$ of finite subtrees $\cA$ of $\cT$, rooted at $0$, with depth $\ell$ such that
\begin{equation}\label{3.20}
\{\cT^0 = \cA\} \cap \cD \not= \emptyset\,.
\end{equation}

\n
With a similar notation as in (\ref{3.9}) we write $\cT^0_k = \cT^0 \cap S_1(0,k)$ and  $\cA_k = \cA \cap S_1 (0,k)$, for $0 \le k \le \ell$ and $\cA$ in $\cS$. Note that for $\cA$ in $\cS$ we have
\begin{equation}\label{3.21}
\begin{array}{l}
\{\cT^0 = \cA\} = A_1 \cap \dots \cap A_\ell, \;\mbox{where for $1 \le k \le \ell$}\,,
\\
A_k = \{\omega \in \Omega; \;n_x = 0$, for $x \in \cA_k$, and $n_x > 0$, for $x \in (\cT_k \backslash \cA_k) \cap \partial \cA_{k-1}\} \,.
\end{array}
\end{equation}

\n
We also write $A_0 = \Omega$ by convention. Due to (\ref{3.12}) we see that 
\begin{equation}\label{3.22}
A_i, 1 \le i \le \ell, \;\mbox{are independent} \,.
\end{equation}

\n
Our aim is to show that $\cI^{u_0}$ has a thin trace on $\cT^0$, when $d$ is large. As a preparation we make the following observation. Consider $\cA \in \cS$, $\omega \in \{\cT^0 = \cA\}$, $0 \le k \le \ell$, and $x \in \cA_k$. We know from (\ref{3.10}) and (\ref{1.30}) that $W^*_z \cap W^*_{\{x\}} = \emptyset$, when $|z|_1 > k$ $( = |x|_1$). On the other hand the events $W^*_z$, $z \in \IZ^d$, partition $W^*$. Therefore, if $x \in \cI^{u_0}(\omega)$, then $\omega((W_z^* \cap W^*_{\{x\}}) \times [0,u_0]) \ge 1$, for some $z$ in $B_1(0,k)$. This implies that on $\{\cT^0 = \cA\}$,
\begin{align}
& |\cA_k \cap \cI^{u_0}| \le \dsl_{0 \le m \le k} N_{m,k}, \;\mbox{where}\label{3.23}
\\[1ex]
&N_{m,k} = \dsl_{|z|_1 = m, x \in \cA_k} \o((W_z^* \cap W^*_{\{x\}}) \times [0,u_0]), \;\;\mbox{for $0 \le m \le k \le \ell$} \,. \label{3.24}
\end{align}

\n
The crucial control on the trace of $\cI^{u_0}$ on $\cT^0$ comes in the next lemma.

\begin{lemma}\label{lem3.3}
\begin{equation}\label{3.25}
\IP[| \cT^0_k \cap \cI^{u_0}| \ge d^{-\ve/2} | \cT^0_k|] \le c\,\exp\{ - c\,d^{\ve/2}\}, \;\;\mbox{for $1 \le k \le \ell$} \,.
\end{equation}
\end{lemma}

\begin{proof}
In view of (\ref{3.17}), of the fact that $\cD \subseteq \bigcup_{\cA \in \cS} \{ \cT^0 = \cA\}$, (this is in fact an equality), and (\ref{3.23}), it suffices to show that for $1 \le k \le \ell$, $0 \le m \le k$,
\begin{equation}\label{3.26}
\sup\limits_{\cA \in \cS} \IP[N_{m,k} \ge (\ell + 1)^{-1} \,d^{-\ve/2} |\cA_k| \;|\cT^0 = \cA] \le c\, \exp\{-cd^{\ve/2}\}\,.
\end{equation}

\n
The variables $n_z$, with $|z|_1 \not= m$, are independent of $N_{m,k}$. It now follows from (\ref{3.21}) that for $\lambda > 0$, $0 \le m \le k \le \ell$, $\cA \in \cS$, we have:
\begin{equation}\label{3.27}
\begin{array}{l}
\IP[N_{m,k} \ge (\ell + 1)^{-1} \,d^{-\ve/2} |\cA_k|\; | \cT^0 = \cA ] = \IP [N_{m,k} \ge (\ell + 1)^{-1} \,d^{-\ve/2} |\cA_k | \; | A_m] \le \;
\\[1ex]
\exp\{-\lambda (\ell + 1)^{-1} \,d^{-\ve/2} \,|\cA_k| \} \,\IE [\exp\{ \lambda \,N_{m,k}\} | A_m]\,.
\end{array}
\end{equation}

\n
We will bound the last term of (\ref{3.27}). In the end we will pick $\lambda = 1$, see (\ref{3.49}), but it will be instructive to keep $\lambda$ generic in the mean time. Part of the difficulty will have to do with the conditioning present in the last term of (\ref{3.27}). We will first prove (\ref{3.39}), and afterwards bound the right-hand side of (\ref{3.39}), see (\ref{3.46})~-~(\ref{3.48}). We introduce the notation
\begin{equation}\label{3.28}
\begin{array}{l}
f_k(w^*) = \dsl_{n \in \IZ} 1\{X_n (w) \in \cA_k\}, \;\mbox{for $w^* \in W^*$, with $w \in W$ arbitrary}
\\
\mbox{such that $\pi^*(w) = w^*$} ,
\end{array}
\end{equation}

\n
The next step is to derive the upper bound, which appears in (\ref{3.39}) below. We denote with $\langle \o, h\rangle = \sum_i h(w_i^*,u_i)$, for $\o = \sum_{i \ge 1} \delta_{(w^*_i,u_i)}$, the integral of a non-negative measurable $h$ on $W^* \times \IR_+$, with respect to the point measure $\omega$. As a result of (\ref{3.24}) we see that for $0 \le m \le k \le \ell$,
\begin{equation}\label{3.29}
N_{m,k} \le \dsl_{|z|_1 = m} \big\langle \o, \;(1_{W^*_z} f_k) \otimes 1_{[0,u_0]}\big\rangle \,.
\end{equation}
Since the events $W^*_z, z \in \IZ^d$, partition $W^*$, the random vectors
\begin{equation}\label{3.30}
\big(n_z, \big\langle \o, (1_{W^*_z} f_k) \otimes 1_{[0, u_0]}\big\rangle\big), \; z \in S_1(0,m), \;\mbox{are independent}\,.
\end{equation}

\n
In view of the definition of $A_m$, see (\ref{3.21}), we thus see that
\begin{align}
&\IE [\exp\{\lambda\,N_{m,k}\} \,| A_m] \le a_1\,a_2, \;\mbox{where} \label{3.31}
\\[2ex]
&a_1 = \mbox{\f $\dis\prod\limits_{z \in \cZ_1}$} \IE[\exp\{\lambda \langle \omega, (1_{W^*_z} f_k) \otimes 1_{[0, u_0]}\rangle \} ]  \label{3.32}
\\
&a_2 = \mbox{\f $\dis\prod\limits_{z \in \cZ_2}$} \IE[\exp\{\lambda \langle \omega, (1_{W^*_z} f_k) \otimes 1_{[0, u_0]}\rangle \} | n_z >0 ] \nonumber
\intertext{and we have set}
& \cZ_1 = S_1(0,m) \backslash (\cA_m \cup \partial \cA_{m-1}), \;\mbox{when $1 \le m \le \ell$, and $\cZ_1 = \{0\}$, when $m = 0$}, \nonumber
\\
& \cZ_2 = (S_1(0,m) \backslash \cA_m) \cap \partial \cA_{m-1}, \;\mbox{when $1 \le m \le \ell$, and $\cZ_2 = \emptyset$, when $m = 0$}. \nonumber
\end{align}

\n
From the definition of $\IP$, see below (\ref{1.30}), we find that
\begin{equation}\label{3.33}
a_1 = \exp\Big\{\dsl_{z \in \cZ_1} u_0 \,\dis\int_{W_z^*} (e^{\lambda f_k} - 1) \, d\nu\Big\}\,.
\end{equation}

\medskip\n
We will now bound $a_2$. For this purpose we consider $z \in \cZ_2$ and note that
\begin{equation}\label{3.34}
\begin{array}{l}
n_z = X+Y, \;\mbox{where} \; X= \o((W^*_z \cap W^*_{\cA_k}) \times [0,u_0]) \;\mbox{and} 
\\[0.5ex]
Y = \o ((W^*_z \backslash W^*_{\cA_k}) \times [0, u_0]) \;\mbox{are independent Poisson}
\\[0.5ex]
\mbox{variables with respective intensity $u_0 \,\nu(W^*_z \cap W^*_{\cA_k})$} 
\\[0.5ex]
\mbox{and} \; u_0 \,\nu(W^*_z \backslash W^*_{\cA_k}) \,.
\end{array}
\end{equation}

\n
We also introduce the variable
\begin{equation}\label{3.35}
\begin{array}{l}
Z = \exp\big\{ \lambda \big\langle \o, (1_{W^*_z} f_k) \otimes 1_{[0,u_0]} \big\rangle\big\}, \;\mbox{which is independent of $Y$},
\\[0.5ex]
\mbox{(indeed $f_k$ vanishes on $W^*_z \backslash W^*_{\cA_k}$)}.
\end{array}
\end{equation}

\n
The generic factor in the product defining $a_2$ in (\ref{3.32}) equals
\begin{equation}\label{3.36}
\begin{array}{l}
\IE[Z | X + Y \ge 1] = \IE[Z | Y \ge 1] \,\IP[Y \ge 1 | X+Y \ge 1] \;+ 
\\[1ex]
\IE[Z |Y = 0, X \ge 1] \,\IP[Y = 0 | X+Y \ge 1] \stackrel{\rm independence}{=}
\\[2ex]
\IE[Z] ( 1- \IP[Y = 0 | X+Y \ge 1]) + \IE[Z | X \ge 1] \,\IP[Y=0 | X + Y \ge 1]\,.
\end{array}
\end{equation}

\n
Since $Z = 1$ on $\{X=0\}$, we also have $\IE[Z | X \ge 1] = (\IE[Z] - \IP[X=0]) / \IP[X \ge 1]$. Inserting this identity in the last line of (\ref{3.36}) we find that
\begin{equation}\label{3.37}
\begin{array}{l}
\IE[Z | X + Y \ge 1] = \IE[Z] \Big(1 + \dis\frac{\IP[X=0]}{\IP[X \ge 1]} \;\Big(1 - \dis\frac{1}{\IE[Z]}\Big) \,\IP[Y = 0 | X + Y \ge 1]\Big) =
\\[3ex]
\IE[Z]  \Big(1 + \IP[X=0] \Big(1 - \dis\frac{1}{\IE[Z]}\Big)\;\dis\frac{\IP[Y=0]}{\IP[n_z > 0]}\Big) =
\\[3ex]
 \IE[Z]\Big(1 + \Big(1-e^{-u_0  \int_{W^*_z} (e^{\lambda \,f_k} -1)d \nu}\Big) \;\dis\frac{e^{-u_0 \nu(W^*_z)}}{1-e^{-u_0 \nu(W^*_z)}}\Big)\,.
\end{array}
\end{equation}

\n
Using the inequality $1-e^{-u} \le u$, we find that
\begin{equation}\label{3.38}
\begin{array}{l}
\IE[Z | X+ Y \ge 1] \le \IE[Z] \,\exp\Big\{\dis\frac{e^{-u_0 \nu(W^*_z)}}{1-e^{-u_0 \nu(W^*_z)}} \;u_0 \dis\int_{W^*_z} (e^{\lambda f_k} -1) d \nu\Big\} =
\\[2ex]
\exp\Big\{\dis\frac{1}{1-e^{-u_0 \nu(W^*_z)}}  \;u_0 \dis\int_{W^*_z} (e^{\lambda f_k} -1) d \nu\Big\}\,.
\end{array}
\end{equation}

\n
With (\ref{3.14}), (\ref{3.2}), we know that $e^{-u_0 \nu(W^*_z)} \le c\,d^{-(1-\ve)}$. Coming back to (\ref{3.31}) and noting that $\cZ_1 \cup \cZ_2$ equals $S_1(0,m) \backslash \cA_m$, when $1 \le m \le \ell$, and $\{0\}$, when $m = 0$, we see that for $d \ge c$, and $0 \le m \le k$, $1 \le k \le \ell$, we have
\begin{equation}\label{3.39}
\IE[\exp\{ \lambda\,N_{m,k}\} | A_m] \le \exp\Big\{\dsl_{z \in S_1(0,m)\backslash \cA_m} (1 + c\,d^{-(1 - \ve)}) \,u_0 \dis\int_{W^*_z} (e^{\lambda f_k} - 1) d\nu\Big\}\,,
\end{equation}

\n
when $1 \le m \le k$, and a similar formula with $\{0\}$ in place of $S_1(0,m) \backslash \cA_m$, when $m=0$.

\medskip
We will now bound the integral in the right-hand side of (\ref{3.39}). In view of (\ref{1.31}) it equals:
\begin{equation}\label{3.40}
\begin{array}{l}
E^{Q_z} \big[  |X_n|_1 > |z|_1, \;\mbox{for} \; n < 0, |X_n|_1 \ge |z|_1,\; \mbox{for} \; n \ge 0, \;
e^{\lambda \sum\limits_{n \in \IZ} 1\{X_n \in \cA_k\}} - 1\big] =
 \\[1ex]
 a_- a_+ + a_- \,P_z[|X_n|_1 \ge |z|_1, \;\mbox{for} \;n \ge 0] + P_z[|X_n|_1 > |z|_1, \;\mbox{for} \;n > 0] \,a_+\,,
\end{array}
\end{equation}
where we have set
\begin{equation}\label{3.41}
\begin{split}
a_- & = E_z \big[|X_n|_1 > |z|_1, \;\mbox{for} \; n > 0, \;e^{\lambda \sum_{n > 0} 1\{X_n \in \cA_k\}} - 1\big],
\\[1ex]
a_+ & = E_z \big[|X_n|_1 \ge |z|_1, \;\mbox{for} \; n \ge 0, \;e^{\lambda \sum_{n \ge 0} 1\{X_n \in \cA_k\}} - 1\big],
\end{split}
\end{equation}

\n
and used the identity $u v-1 = (u-1) (v-1) + (u-1) + (v-1)$, together with (\ref{1.32}), (\ref{1.24}). Applying the strong Markov property at time $\wt{H}_{\cA_k}$, and denoting with $\psi$ the same function as below (\ref{1.3}), with the choice $K = \cA_k$,  we find that, when $d \ge c$, for $0 \le m \le k$, $1 \le k \le \ell$, $z \in S_1 (0,m) \backslash \cA_m$, when $m\not=0$ and $z=0$, when $m = 0$, one has:
\begin{equation}\label{3.42}
\begin{array}{l}
a_- \le P_z[|X_n|_1 > |z|_1, \;\mbox{for} \;0 < n \le \wt{H}_{\cA_k}, \wt{H}_{\cA_k} < \infty] (|\psi |_\infty - 1) \stackrel{(\ref{1.12})}{\le}
\\[1ex]
P_z\big[|X_n|_1 > |z|_1, \;\mbox{for} \;0 < n \le \wt{H}_{\cA_k}, \,\wt{H}_{\cA_k} < \infty \,,
\\[1ex]
P_{X_{\wt{H}_{A_k}}} [|X_n|_1 > |X_0|_1, \;\mbox{for} \;n > 0]\big] (|\psi|_\infty - 1) \Big(1 + \mbox{\f $\dis\frac{c}{d}$}\Big) \le
\\[1ex]
P_z\big[|X_n|_1 > |z|_1, \;\mbox{for all} \; n> 0, \;\wt{H}_{\cA_k} < \infty] (|\psi|_\infty - 1) \Big(1 + \mbox{\f $\dis\frac{c}{d}$}\Big)\,.
\end{array}
\end{equation}

\n
In the same fashion we also have
\begin{equation}\label{3.43}
a_+ \le P_z [|X_n|_1 \ge |z|_1, \;\mbox{for all} \; n \ge 0, \,H_{\cA_k} < \infty] (|\psi|_\infty - 1) \Big(1 + \mbox{\f $\dis\frac{c}{d}$}\Big)\,.
\end{equation}

\n
Inserting these bounds in the last line of (\ref{3.40}), we see that when $d \ge c$, $0 \le m \le k$, $1 \le k \le \ell$, and $z$ as above (\ref{3.42}), one has
\begin{equation}\label{3.44}
\dis\int_{W^*_z} (e^{\lambda f_k} - 1) d \nu \le c \,\nu(W^*_z \cap W^*_{\cA_k}) [(|\psi|_\infty - 1)^2 + |\psi|_\infty - 1]\,.
\end{equation}
With Lemma \ref{lem1.1} and (\ref{1.12}), we find that for $d \ge c$, when $\lambda \le 1$,
\begin{equation}\label{3.45}
|\psi|_\infty - 1 \le (e^\lambda - 1)\Big(1 + \mbox{\f $\dis\frac{c}{d}$}\Big) \le c \,\lambda\,,
\end{equation}

\n
so that for $1 \le m \le k$, $1 \le k \le \ell$, and $0 \le \lambda \le 1$, we have:
\begin{equation}\label{3.46}
\begin{array}{l}
\dsl_{z \in S_1(0,m) \backslash \cA_m} \;\dis\int_{W^*_z} (e^{\lambda f_k} - 1) d\nu \le c\,\lambda \,\nu\Big(\Big(\bigcup\limits_{z \in S_1(0,m) \backslash \cA_m} W^*_z\Big) \cap W^*_{\cA_k}\Big)
\\
\stackrel{(\ref{1.33})}{\le} c \,\lambda \,\cE(S_1 (0,m) \backslash \cA_m, \cA_k)\,,
\end{array}
\end{equation}

\n
and a similar inequality with $\{0\}$ in place of $S_1(0,m) \backslash \cA_m$, when $m = 0$. 

\medskip
When $m < k$, then we see with (\ref{1.33}) that
\begin{equation}\label{3.47}
\begin{array}{rrll}
{\rm i)} & \cE(S_1(0,m)  \backslash \cA_m, \cA_k)&\hspace{-1ex} \le&\hspace{-2ex} |\cA_k| \sup\limits_{x \in \cA_k} \,P_x[H_{S_1(0,m)  \backslash \cA_m} < \infty] 
\\
&&\hspace{-1ex}\!\!\! \stackrel{(\ref{1.12})}{\le}&\hspace{-2ex} \mbox{\f $\dis\frac{c}{d}$} \;| \cA_k|, \;\mbox{when} \; m \not= 0 \,.
\\[3ex]
{\rm ii)} &  \cE(\{0\}, \cA_k) &\hspace{-1ex}\le&\hspace{-2ex} \mbox{\f $\dis\frac{c}{d}$} \;| \cA_k|,\;\mbox{when $m = 0$, (in an analogous fashion)}.
\end{array}
\end{equation}

\n
On the other hand when $m = k$, we have
\begin{equation}\label{3.48}
\begin{array}{l}
\cE(S_1(0,k) \backslash \cA_k, \cA_k) \le |\cA_k| (\sup\limits_{\cA_k} \,P_x[H_{S_1(0,k-1)} < \infty] \; +
\\
 \sup\limits_{\cA_k} \,P_x[H_{S_1(0,k)} \circ \theta_{H_{S_1(0,k+1)}} < \infty])\stackrel{(\ref{1.12})}{\le} \mbox{\f $\dis\frac{c}{d}$} \;|\cA_k|\,.
\end{array}
\end{equation}

\n
Coming back to (\ref{3.27}), (\ref{3.39}), (\ref{3.46}), we see that for $d \ge c$, $0 \le \lambda \le 1$, $\cA \in \cS$, $0 \le m \le \ell$, $1 \le k \le \ell$, we have
\begin{equation}\label{3.49}
\begin{array}{l}
\IP[N_{m,k} \ge (\ell + 1)^{-1} \,d^{-\ve/2} |\cA_k| \,| \cT^0 = \cA] \le
\\
\exp\{ \lambda | \cA_k| ( - (\ell + 1)^{-1} \,d^{-\ve/2} + cd^{-1})\} \le \exp\Big\{ - \dis\frac{\lambda \,d^{-\ve/2}}{2(\ell + 1)} \;|\cA_k|\Big\}\,.
\end{array}
\end{equation}

\n
Choosing $\lambda = 1$, and taking into account that $|\cA_k| \ge c\,d^\ve$ due to (\ref{3.20}), (\ref{3.17}), we find (\ref{3.26}). The claim of Lemma \ref{lem3.3} then follows.
\end{proof}

We thus proceed with the proof of Theorem \ref{theo3.1} and specifically with the proof of (\ref{3.6}). We introduce the subtree $\cT^\prime$ of $\cT^0$, which survives the pruning due to the damage caused by $\cI^{u_0}$. More precisely, writing $\cT_k^\prime = \cT^\prime \cap S_1 (0,k)$, for $0 \le k \le m$, we define
\begin{equation}\label{3.50}
\cT^\prime_0 = \cT^0_0 = \{0\}, \;\mbox{and}\;  \cT^\prime_k = \{x \in \cT^0_k; \; x \in \partial \,\cT^\prime_{k-1} \backslash \cI^{u_0}\}, \;\mbox{for $1 \le k \le \ell$}\,.
\end{equation}

\n
As a result of Lemma \ref{lem3.3}, we see that on the event $\cD$, except maybe on a a set of probability at most $c \,e^{-cd^{\ve/2}}$ we have:
\begin{equation}\label{3.51}
|\cT^\prime_k| \ge  \; e^{-\frac{10^{-3}}{\ell}} \; \mbox{\f $\dis\frac{d^\ve}{\ell}$} | \cT^\prime_{k-1}| - d^{-\ve/2} |\cT^0_k|, \;\mbox{for $1 \le k \le \ell$} \,.
\end{equation}

\n
As we now explain, when $d \ge c$, these relations and the definition of $\cD$ in (\ref{3.17}) imply that
\begin{equation}\label{3.52}
|\cT^\prime_k| \ge e^{-\frac{k}{100 \ell}} |\cT^0_k |, \;\mbox{for} \;0 \le k \le \ell \,.
\end{equation}

\n
Indeed this is true for $k=0$. If it holds for $k-1 < \ell$, then with (\ref{3.51}) we find that,
\begin{equation*}
|\cT^\prime_k| \ge e^{- \frac{10^{-3}}{\ell}} \; 
\mbox{\f $\dis\frac{d^\ve}{\ell}$}  \;  e^{-\frac{k-1}{100 \ell}} |\cT^0_{k-1}| - d^{-\ve/2} |\cT^0_k| \stackrel{{\rm on} \;\cD}{\ge} \;\big(e^{-\frac{k-1}{100 \ell}} \,e^{-\frac{1}{500 \ell}} -d^{-\ve/2}\big) | \cT^0_k | \stackrel{d\ge c}{\ge} e^{-\frac{k}{100 \ell}} |\cT^0_k|\,,
\end{equation*}
and this proves (\ref{3.52}) by induction.

\medskip
As a result we have shown that on a set of probability at least $1 - c \,e^{-cd^{\ve/2}}$, the subtree $\cT^\prime$ of $\cT^0 \subseteq C$ satisfies:
\begin{equation}\label{3.53}
\cT^\prime \cap \cI^{u_0} \subseteq \{0\}, \;|\cT^\prime_\ell | \ge e^{-\frac{1}{100} - \frac{1}{1000}} \;\Big(\mbox{\f $\dis\frac{d^\ve}{\ell}$} \Big)^\ell , \;\mbox{and} \; |\cT^\prime_1| \le e^{\frac{10^{-3}}{\ell}} \;\mbox{\f $\dis\frac{d^\ve}{\ell}$}\,.
\end{equation}

\n
Thus when $d \ge c$, at least one element of $\cT^\prime_1$ belongs to a component of $\cV^{u_0} \cap C$ with at least $L = [\frac{1}{2} (\frac{d^\ve}{\ell})^{\ell -1}]$ elements, and (\ref{3.7}) follows. Theorem \ref{theo3.1} is proved.
\end{proof}

\section{Sprinkling}
\setcounter{equation}{0}

In this section we will complete the proof of Theorem \ref{theo0.1}. The strategy is to check that assumption (\ref{2.4}) of Theorem \ref{theo2.2} holds when we choose $u(d) = u_2 (< u_0)$, in the notation of (\ref{4.1}) below. We rely on Theorem \ref{theo3.1} of the previous section and use two successive sprinkling operations in order to check (\ref{2.4}) relative to $u_2$. The first sprinkling operation amounts to working with $\cV^{u_1}$ in place of $\cV^{u_0}$, where $u_2 < u_1 < u_0$, see (\ref{4.1}). We show in Theorem \ref{theo4.2} that with high probability $\cV^{u_1} \cap C$ contains a ubiquitous component in the sense of Lemma \ref{lem2.1}. Replacing the level $u_0$ with the level $u_1$ amounts to ``sprinkling more vacant sites''. This is achieved by thinning the trajectories of the random interlacement at level $u_0$ with the help of i.i.d. Bernoulli variables, cf.~(\ref{4.14}), (\ref{4.15}). As in \cite{AlonBenjStac04}, \cite{Gord91}, we use isoperimetry considerations to ensure that this first sprinkling typically merges together any sizeable union of substantial components of $\cV^{u_0} \cap C$, with volume at least $d^{-4} |C|$, cf.~(\ref{4.27}), and thus creates the desired ubiquitous component of $\cV^{u_1} \cap C$, cf.~Theorem \ref{theo4.2}. The second sprinkling amounts to working with $\cV^{u_2}$ in place of $\cV^{u_1}$ and ensures that with high probability the ubiquitous components of $\cV^{u_1}$ in $C_y$, for $|y|_1 \le 1$, $y$ in $\IZ^2$, are mutually connected at level $u_2$, and (\ref{2.4}) holds for $u_2$, cf.~Theorem \ref{theo4.4}. Although the general line of attack is similar to \cite{AlonBenjStac04}, \cite{Gord91}, the long range dependence of random interlacements makes the implementation of this general strategy different from what is done in the case of Bernoulli percolation. As in the previous section all constants depend on the parameter $\ve$, see (\ref{3.1}). We first introduce some notation. We set, see (\ref{3.2}), (\ref{2.1}), 
\begin{align}
u_2 & = (1- 3 \ve) \,g(0) \,\log d < u_1 = (1-2 \ve) \,g(0) \,\log d < u_0, \; \mbox{and}\label{4.1}
\\[1ex]
\wh{C} & = \bigcup\limits_{|y|_1 \le 1, y \in \IZ^2} C_y\,. \label{4.2}
\end{align}

\n
As a preparation for the sprinkling operations we intend to perform, which involve thinning out trajectories of the random interlacement, we first bound the number of trajectories going through a point of $\wh{C}$ and the time spent in $\wh{C}$ by trajectories of the interlacement meeting $\wh{C}$. We recall (\ref{1.34}) and (\ref{3.3}) for the notation.

\begin{lemma}\label{lem4.1}
\begin{align}
&\IP[\mbox{for some} \;|y|_1 \le 1 \;{\rm in} \; \IZ^2, \;|\{x \in C_y; \,\mu_{\{x\},u_0} (W_+) \ge \alpha \,\log d\} | \ge d^{-7} |C|] \le d^{-7} / 100 \label{4.3}
\\[2ex]
&\IP\Big[\mu_{\wh{C},u_0} \big(\big\{ w \in W_+; \;\dsl_{n \ge 0} 1\{X_n(w) \in \wh{C}\} \ge c_3 \,d\big\}\big) > 0\Big] \le c\,e^{-c\,d}\,. \label{4.4}
\end{align}
\end{lemma}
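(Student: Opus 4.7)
The plan is to handle (\ref{4.3}) by a short Chebyshev-plus-union-bound argument based on (\ref{3.4}), and (\ref{4.4}) by reducing to an exponential tail estimate on the occupation time of $\wh{C}$ via Lemma \ref{lem1.1}.

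For (\ref{4.3}), the starting point is that by (\ref{1.34}), (\ref{1.35}) and (\ref{1.26}), each $\mu_{\{x\},u_0}(W_+)$ is a Poisson variable with parameter $u_0\,{\rm cap}(\{x\}) = u_0/g(0) = (1-\ve)\log d$. Hence (\ref{3.4}) gives $\IP[\mu_{\{x\},u_0}(W_+)\ge\alpha\log d]\le d^{-14}/500$ for each $x$. Setting $Z_y=|\{x\in C_y:\mu_{\{x\},u_0}(W_+)\ge\alpha\log d\}|$, linearity yields $\IE[Z_y]\le|C|\,d^{-14}/500$, and Markov's inequality produces $\IP[Z_y\ge d^{-7}|C|]\le d^{-7}/500$. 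A union bound over the five $y\in\IZ^2$ with $|y|_1\le 1$ then proves (\ref{4.3}).

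For (\ref{4.4}), writing $L_{\wh{C}}(w)=\sum_{n\ge 0}1\{X_n(w)\in\wh{C}\}$, the event at stake is $\{\mu_{\wh{C},u_0}(A)>0\}$ with $A=\{L_{\wh{C}}\ge c_3 d\}$. By (\ref{1.35}) its probability is at most $\theta:=u_0\,P_{e_{\wh{C}}}[L_{\wh{C}}\ge c_3 d]\le u_0\,{\rm cap}(\wh{C})\,\sup_{x\in\wh{C}}P_x[L_{\wh{C}}\ge c_3 d]$. So it suffices to produce exponential tail decay on $L_{\wh{C}}$ under $P_x$, uniform in $x\in\wh{C}$, strong enough to absorb ${\rm cap}(\wh{C})\le|\wh{C}|=5\cdot 2^d$ and the factor $u_0\le c\log d$.

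The main obstacle is to verify the hypothesis of Lemma \ref{lem1.1} applied to $K=\wh{C}$, namely that $p:=\sup_{x\in\wh{C}}P_x[\wt{H}_{\wh{C}}<\infty]$ is bounded away from $1$ uniformly in $d$. I plan to do this by first bounding $M:=\sup_{x\in\wh{C}}E_x[L_{\wh{C}}]=\sup_x\sum_{x'\in\wh{C}}g(x,x')$ by a $d$-free constant; the inequality $E_x[L_K]\le 1+P_x[\wt{H}_K<\infty]\,\sup_{y\in K}E_y[L_K]$, obtained from the strong Markov property at $\wt{H}_K$, then gives $M\le 1+pM$, hence $p\le 1-1/M$. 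For the bound on $M$: when $x,x'\in\wh{C}$, the difference $z=x-x'$ has its first two coordinates in $\{-5,\dots,5\}$ and its remaining $d-2$ coordinates in $\{-1,0,1\}$ with the sign of each nonzero entry determined by $x$. Consequently the number of $x'\in\wh{C}$ with $|x-x'|_1=k$ is at most $c\binom{d}{k}$, and combining with (\ref{1.10}) and the sharp asymptotic $g(z)\le c\,k!\,(2d)^{-k}$ for $z\in\{-1,0,1\}^d$ with $|z|_1=k$ (cf.~(\ref{1.13}) and Remark \ref{rem1.3}), this yields $\sum_{x'\in\wh{C}}g(x,x')\le c\sum_{k\ge 0}\binom{d}{k}\,k!\,(2d)^{-k}\le c\sum_{k\ge 0}2^{-k}$, a $d$-independent constant.

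With $M$ in hand, pick $\lambda>0$ small enough that $e^\lambda(1-1/M)<1$. Lemma \ref{lem1.1} then delivers $\sup_{x\in\IZ^d}E_x[\exp(\lambda L_{\wh{C}})]\le B$ for a constant $B$, and Markov gives $P_x[L_{\wh{C}}\ge c_3 d]\le Be^{-\lambda c_3 d}$. Substituting into the bound on $\theta$ produces $\theta\le c(\log d)\,2^d\, Be^{-\lambda c_3 d}$, which is $\le ce^{-cd}$ as soon as the numbered constant $c_3$ is fixed so that $\lambda c_3>\log 2$. This establishes (\ref{4.4}).
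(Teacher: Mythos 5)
Your proof of (\ref{4.3}) is correct and is essentially the paper's own argument (Markov's inequality for $|\{x\in C:\mu_{\{x\},u_0}(W_+)\ge\alpha\log d\}|$ using (\ref{1.35}), (\ref{1.26}), (\ref{3.4}), then a union bound over the five $y$). Your skeleton for (\ref{4.4}) -- the first--moment bound $\IP[\mu_{\wh{C},u_0}(A)>0]\le u_0\,{\rm cap}(\wh{C})\sup_xP_x(A)$ followed by an exponential tail for the occupation time via Lemma \ref{lem1.1} -- is also the right one, and is close to what the paper does. The gap is in your verification of the hypothesis of Lemma \ref{lem1.1} for $K=\wh{C}$.

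The deduction ``$M\le 1+pM$, hence $p\le 1-1/M$'' is reversed: $M\le 1+pM$ gives $M(1-p)\le 1$, i.e.\ $p\ge 1-1/M$, a \emph{lower} bound on $p$, which says nothing about $p$ being bounded away from $1$. The inequality that goes the useful way is $E_x[L_K]\ge 1+P_x[\wt{H}_K<\infty]$, whence $p\le M-1$; but this is vacuous unless $M<2$, and here $M\ge 2-o(1)$: already for the single cube, $\sum_{x'\in C}g(0,x')=\sum_{k\ge0}\binom{d}{k}\sup_{|z|_1=k}g(z)\ (1+o(1))$ with $\binom{d}{k}\,k!\,(2d)^{-k}\to 2^{-k}$, so $M_C\to 2$, and $M_{\wh{C}}>M_C$. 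In fact no bound on the Green sum alone can give what you need: for $K=B_1(0,1)$ one has $\sum_{x'\in K}g(0,x')=2+O(1/d)$ while $P_0[\wt{H}_K<\infty]=1$, since the first step never leaves $K$. So controlling $M$ does not yield $\sup_{x\in\wh{C}}P_x[\wt{H}_{\wh{C}}<\infty]\le 1-c$, and this step of your argument genuinely fails.

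The paper circumvents this by never applying Lemma \ref{lem1.1} to $\wh{C}$ directly. It first records $\inf_{z\in C}P_z[\wt{H}_C=\infty]\ge c>0$ for the single cube (this is where your Green--function computation is actually useful: $C=\{0,1\}^d$ is vertex--transitive under isometries of $\IZ^d$, so the last--exit decomposition $1=\sum_{x'\in C}g(z,x')\,e_C(x')$ gives $P_z[\wt{H}_C=\infty]=\big(\sum_{x'\in C}g(z,x')\big)^{-1}\ge 1/M_C\ge c$). Lemma \ref{lem1.1} applied to $C$ then yields $\sup_xE_x[\exp\{c\sum_{n\ge0}1\{X_n\in C\}\}]\le2$, and since the occupation time of $\wh{C}$ is the sum of the occupation times of the five translates $C_y$, H\"older's inequality with exponent $5$ transfers the exponential moment to $\wh{C}$ with parameter $c/5$. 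Note that $\wh{C}$ itself is not vertex--transitive (the central cube is adjacent to four others, each arm to one), which is exactly why the detour through $C$ is needed. With this repair, the remainder of your argument for (\ref{4.4}) -- Markov's inequality and the choice of $c_3$ with $\lambda c_3>\log 2$ to beat ${\rm cap}(\wh{C})\le 5\cdot 2^d$ -- goes through.
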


\begin{proof}
The probability in (\ref{4.3}) is smaller than
\begin{align*}
&5 \IP[|\{x \in C; \mu_{\{x\},u_0}(W_+) \ge \alpha \log d\}| \ge d^{-7}\,|C|] \stackrel{Chebyshev}{\le}
\\[1ex]
&5d^7 \,\IP[\mu_{\{0\},u_0} (W_+) \ge \alpha \, \log d]  \stackrel{(\ref{1.35}), (\ref{1.26}), (\ref{3.4})}{\le} d^{-7}/100 \,.
\end{align*}

\medskip\n
We now turn to the proof of (\ref{4.4}). Since we assume that $d \ge 3$, it follows that
\begin{equation}\label{4.5}
\inf\limits_{z \in C} \;P_z[\wt{H}_C = \infty] \ge c > 0\,.
\end{equation}

\n
As a result of Lemma \ref{lem1.1}, we see that for a small enough constant $c$,
\begin{equation*}
\sup\limits_{z \in \IZ^d} \;E_x \Big[\exp\Big\{c\, \dsl_{n \ge 0} \,1\{X_n \in C\}\Big\}\Big] \le 2\,.
\end{equation*}

\n
Setting $c^\prime = c/5$, an application of H\"older's inequality thus yields 
\begin{equation}\label{4.6}
\sup\limits_{x \in \IZ^d} \;E_x\Big[\exp\Big\{c^\prime\mbox{$ \dsl_{n \ge 0}$} 1\{X_n \in \wh{C}\}\Big\}\Big] \le 2\,.
\end{equation}

\n
With (\ref{1.35}), we find that for $\lambda, \rho > 0$, and $c^\prime$ as in (\ref{4.6}) we have
\begin{equation}\label{4.7}
\begin{array}{l}
\IP \Big[\mu_{\wh{C},u_0} \big(\big\{w \in W_+; \dsl_{n \ge 0} 1\{X_n(w) \in \wh{C}\} \ge \rho\big\}\big) > 0\Big] \le \IP[\mu_{\wh{C},u_0}(W_+) \ge 2u_0 | \wh{C}|] \; +
\\[1ex]
2 u_0 |\wh{C} | \;\sup\limits_{x \in \IZ^d} \,P_x\Big[\dsl_{n \ge 0} 1\{X_n(w) \in \wh{C} \} \ge \rho\Big]  \stackrel{(\ref{1.35}),(\ref{4.6})}{\le}
\\[3ex]
\exp\{u_0 [{\rm cap}(\wh{C}) (e^\lambda -1) - 2\lambda \,|\wh{C}|]\} + 4 u_0 |\wh{C} | \,\exp\{ -c^\prime \rho\}\,.
\end{array}
\end{equation}

\n
Since ${\rm cap}(\wh{C}) \le |\wh{C}| = 5 \cdot 2^d$, choosing $\lambda$ a small enough constant and $\rho = c_3\,d$, with $c_3$ a large enough constant, we obtain (\ref{4.4}). 
\end{proof}

We will now conduct the first sprinkling. For this purpose we introduce a coupling of $\cV^{u_0} \cap C$ and $\cV^{u_1} \cap C$ as follows. On an auxiliary probability space $(\wt{\Omega}, \wt{\cA}, \wt{\IP})$, we consider independent variables
\begin{align}
&\mbox{$\wt{N}$, with Poisson distribution of intensity $u_0\, {\rm cap}(C)$}, \label{4.8}
\\[1ex]
&\mbox{$\wt{X}_\point^i, i \ge 1$, i.i.d. $W_+$-valued variables with distribution $P_{\ov{e}_C}$}, \label{4.9}
\\[-1ex]
&\mbox{where $\ov{e}_C = e_C / {\rm cap}(C)$}, \nonumber
\\[1ex]
& \mbox{$\wt{Z}^i, i \ge 1$, i.i.d. Bernoulli variables with success probability $\mbox{\f $\dis\frac{u_1}{u_0} \;\Big(= \dis\frac{1-2\ve}{1-\ve}\Big)$}$}\,. \label{4.10}
\end{align}
We then define the sub $\sigma$-algebra of $\wt{\cA}$:
\begin{equation}\label{4.11}
\wt{\cA}_0 = \sigma(\wt{N}, \wt{X}_\point^i, i \ge 1)\,,
\end{equation}
and the point processes on $W_+$:
\begin{equation}\label{4.12}
\wt{\mu}_0(dw) = \dsl_{1 \le i \le \wt{N}} \delta_{\wt{X}_\point^i} (dw), \quad \wt{\mu}_1(dw) = \dsl_{1 \le i \le \wt{N}} \wt{Z}^i \,\delta_{\wt{X}_\point^i} (dw)\,.
\end{equation}

\n
With (\ref{1.35}) and (\ref{4.12}) we thus see that
\begin{equation}\label{4.13}
\mbox{$\wt{\mu}_0$ is $\wt{\cA}_0$-measurable with the same distribution as $\mu_{C,u_0}$ under $\IP$}\,.
\end{equation}
Moreover by construction,
\begin{equation}\label{4.14}
\mbox{$\wt{Z}^i, i \ge 1$, are independent of $\wt{\cA}_0$},
\end{equation}

\n
and $\wt{\mu}_1$ is obtained by thinning $\wt{\mu}_0$ with the variables $\wt{Z}_i, i \ge 1$, so that:
\begin{equation}\label{4.15}
\mbox{$\wt{\mu}_1$ has the same distribution as $\mu_{C,u_1}$ under $\IP$}\,.
\end{equation}

\medskip\n
Out next main objective is the following

\begin{theorem}\label{theo4.2} $(d \ge c)$
\begin{equation}\label{4.16}
\begin{array}{l}
\mbox{$\IP\big[\cV^{u_1} \cap C$ has a connected component $\cC$ with} 
\\[0.5ex]
|\ov{\cC} \cap C| \ge (1-d^{-2}) | C |\big] \ge 1-d^{-7}/10,
\end{array}
\end{equation}

\medskip\n
(we recall that when $d \ge c^\prime$, $\cC$ is necessarily unique, cf.~Lemma {\rm \ref{lem2.1}}, and we refer to this component as the ubiquitous component of $\cV^{u_1} \cap C$).
\end{theorem}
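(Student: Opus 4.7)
The strategy is to condition on $\wt{\cA}_0$, which determines $\wt{\mu}_0$ and hence $\cV^{u_0}$ and the collection of substantial components of $\cV^{u_0} \cap C$ (in the sense of Section 3), while the Bernoulli thinnings $\wt{Z}^i$ remain i.i.d.\ and independent of $\wt{\cA}_0$ by (\ref{4.14}). Let $F \subseteq C$ be the union of the substantial components of $\cV^{u_0} \cap C$, and let $\cE_0 \in \wt{\cA}_0$ denote the intersection of the event from Theorem \ref{theo3.1} (so $|\ov{F} \cap C| \ge (1 - e^{-c_2 d^{\ve/2}})|C|$) and the event from (\ref{4.3}) applied with $y = 0$ (at most $d^{-7}|C|$ sites of $C$ meet more than $\alpha \log d$ trajectories of $\wt{\mu}_0$). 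Then $\IP[\cE_0^c] \le c e^{-cd^{\ve/2}} + d^{-7}/500$, and it suffices to show that $\IP[\mbox{no ubiquitous component in }\cV^{u_1} \cap C \mid \wt{\cA}_0] \le d^{-7}/20$ on $\cE_0$.

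The $u_1$-connectivity relation partitions the substantial components into equivalence classes. Let $W_1$ denote the vertex set of the largest class. Since the closed $1$-neighborhood of any $A \subseteq \IZ^d$ has at most $(d+1)|A|$ sites, if $|F \setminus W_1| < d^{-4}|C|$ then the $\cV^{u_1}$-component $\wt{W}_1 \supseteq W_1$ satisfies
\begin{equation*}
|\ov{\wt{W}_1} \cap C| \ge |\ov{F} \cap C| - (d+1)|F \setminus W_1| \ge (1 - e^{-c_2 d^{\ve/2}} - (d+1)d^{-4})|C| \ge (1 - d^{-2})|C|
\end{equation*}
for large $d$, so $\wt{W}_1$ is the desired ubiquitous component. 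A failure thus forces the existence of some union $W$ of substantial components with $d^{-4}|C| \le |W| \le |F|/2 \le |C|/2$ that is $\cV^{u_1}$-disconnected from $F \setminus W$ in $C$; call this event $\mbox{Sep}(W)$.

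If $\mbox{Sep}(W)$ occurs and $\wt{W}$ is the $\cV^{u_1}$-component of $W$, then $\partial_C \wt{W} \subseteq \cI^{u_1}$, and by the monotonicity $\cV^{u_0} \subseteq \cV^{u_1}$ in fact $\partial_C \wt{W} \subseteq \cI^{u_0}$. Harper's edge-isoperimetric inequality for $\{0,1\}^d$ combined with $|\wt{W}| \ge d^{-4}|C|$ and $|C \setminus \wt{W}| \ge L$ gives $|\partial_C \wt{W}| \ge c|C|\log d / d^5$. On $\cE_0$ at least half of these sites are ``nice'' (meet at most $\alpha \log d$ trajectories), and each nice $y \in \cI^{u_0}$ is freed to $\cV^{u_1}$ with conditional probability at least $p := (\ve/(1-\ve))^{\alpha \log d} = d^{-K(\ve)}$ with $K(\ve) = \alpha \log((1-\ve)/\ve)$. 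Using (\ref{4.4}), each trajectory visits at most $c_3 d$ sites of $\wh{C}$; a greedy selection over trajectories extracts $B \subseteq \partial_C \wt{W}$ of cardinality $\ge c|C|/d^6$ whose $\cV^{u_1}$-vacancy events are mutually independent given $\wt{\cA}_0$. This yields
\begin{equation*}
\IP[\mbox{Sep}(W) \mid \wt{\cA}_0] \le \IP[\partial_C \wt{W} \subseteq \cI^{u_1} \mid \wt{\cA}_0] \le (1-p)^{|B|} \le \exp(-c \cdot 2^d / d^{6+K(\ve)}).
\end{equation*}

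The crucial final ingredient is the enumeration of admissible $W$. Rather than enumerating arbitrary connected subsets of $C$ (whose count is too large), we enumerate only over sub-collections of the substantial components themselves: by (\ref{3.5}) each has at least $L \ge c\,d^{2\alpha\log(1/\ve)}/\ell^\ell$ sites, so their total number is at most $|C|/L$ and the number of candidate $W$ is at most $2^{|C|/L}$. For $\ve$ small enough, $L \gg d^{6+K(\ve)}$ (since $2\alpha\log(1/\ve) > 6 + \alpha\log(1/\ve)$ when $\log(1/\ve) > 6/\alpha$), and the union bound
\begin{equation*}
\IP[\mbox{failure} \mid \wt{\cA}_0, \cE_0] \le 2^{|C|/L} \exp(-c \cdot 2^d/d^{6+K(\ve)}) \le d^{-7}/20
\end{equation*}
holds for large $d$. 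The main obstacle is precisely this matching of combinatorial enumeration against sprinkling decay, which is what forces the somewhat intricate tuning of $\ell$ and $L$ back in Section 3; once that balance is in place the rest of the argument is routine.
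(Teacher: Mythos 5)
Your overall architecture matches the paper's: condition on $\wt{\cA}_0$, reduce failure to the existence of a sizeable union $W$ of substantial components separated from the rest, use hypercube isoperimetry plus the per-site thinning probability $(\ve/(1-\ve))^{\alpha\log d}$, extract an independent sub-family via the bounds (\ref{4.3}), (\ref{4.4}), and beat the $2^{\ell_V}\le 2^{|C|/L}$ enumeration using the choice of $\ell,L$ in (\ref{3.5}). However, the central step is flawed as written. You apply isoperimetry to $\wt{W}$, the $\cV^{u_1}$-component of $W$, and then claim $\IP[\mathrm{Sep}(W)\,|\,\wt{\cA}_0]\le\IP[\partial_C\wt{W}\subseteq\cI^{u_1}\,|\,\wt{\cA}_0]\le(1-p)^{|B|}$ with $B\subseteq\partial_C\wt{W}$ a greedily selected subset whose vacancy events are ``mutually independent given $\wt{\cA}_0$.'' But $\wt{W}$ is a function of the Bernoulli variables $\wt{Z}^i$, not of $\wt{\cA}_0$: the inclusion $\partial_C\wt{W}\subseteq\cI^{u_1}$ holds \emph{by definition} of a component's boundary, so the event you are bounding has conditional probability essentially $1$ on $\mathrm{Sep}(W)$, and the product bound over an outcome-dependent set $B$ is not legitimate. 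There is no family of $\wt{\cA}_0$-measurable sites whose simultaneous occupation is implied by $\mathrm{Sep}(W)$ in your construction, so independence cannot be invoked.

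The fix is to apply isoperimetry to an $\wt{\cA}_0$-measurable set, and this is exactly what the paper's Lemma \ref{lem4.3} does. Given the $\wt{\cA}_0$-measurable data $V=\wt{\cV}_0$, $M$, and the partition into $A=W$ and $B=F\setminus W$, one uses the covering property $\ov{A}^{\,C}\cup\ov{B}^{\,C}\cup M=C$ (every site of $C\setminus M$ neighbors a substantial component) together with the Bollob\'as--Leader edge-isoperimetric inequality applied to $\ov{A}^{\,C}$ to produce $K\ge c\,d^{-6}|C|$ pairwise disjoint \emph{deterministic} (given $\wt{\cA}_0$) bridges $\cP_k\subseteq C\setminus M$, each a single site of $\partial_C A\cap\partial_C B$ or an adjacent pair $\{x_k,y_k\}$ with $x_k\in\partial_C A$, $y_k\in\partial_C B$. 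Opening any one bridge (thinning out all, i.e.\ at most $2\alpha\log d$, trajectories through it) connects $A$ to $B$ in $\wt{\cV}_1$, the opening events are genuinely independent across the unrelated bridges extracted by your greedy procedure, and the rest of your computation (with $2\alpha\log d$ in place of $\alpha\log d$, still compatible with (\ref{3.5})) then goes through. You also need to fold the event of (\ref{4.4}) into your $\wt{\cA}_0$-measurable good event $\cE_0$ before using it in the greedy selection, and to add the final step converting ``no sizeable union is separated'' into the ubiquity bound $|\ov{\cC}\cap C|\ge(1-d^{-2})|C|$, as in (\ref{4.41}); these are minor compared with the measurability issue above.
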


\begin{proof}
We use the auxiliary probability space $(\wt{\Omega}, \wt{\cA}, \wt{\IP})$ and introduce
\begin{equation}\label{4.17}
\wt{\cV}_i = C\; \backslash \bigcup\limits_{w \in {\rm Supp}\,\wt{\mu}_i} w(\IN), \; \mbox{for $i = 0,1$}\,.
\end{equation}

\n
In view of (\ref{1.38}), (\ref{4.13}), (\ref{4.15}), we find that for $i = 0,1$,
\begin{equation}\label{4.18}
\mbox{$\wt{\cV}_i$ under $\wt{\IP}$ has the same distribution as $\cV^{u_i} \cap C$ under $\IP$}\,.
\end{equation}

\n
We thus only need to consider $\wt{\cV}_1$ when proving (\ref{4.16}). We introduce the good event, see (\ref{3.6}), (\ref{4.3}), (\ref{4.4}),
\begin{equation}\label{4.19}
\begin{split}
\wt{\cG} =  \;&\{ \wt{\o} \in \wt{\Omega}; \;\mbox{for all but at most $|C|\,e^{-c_2 d^{\ve/2}}$ points, the vertices in $C$ have}
\\
&\;\mbox{a neighbor in a substantial component of $\wt{\cV}_0\} \cap \{\wt{\o} \in \wt{\Omega}$; for all but}
\\
&\;\mbox{at most $d^{-7} |C|$ points, the vertices in $C$ belong to the range of at}
\\
& \; \mbox{most $\alpha \log d$ trajectories in ${\rm Supp} \,\wt{\mu}_0\} \cap \{\wt{\omega} \in \wt{\Omega}$; all trajectories in} 
\\
&\;\mbox{${\rm Supp}\,\wt{\mu}_0$ spend a time at most $c_3 \,d$ in $C\}$}.
\end{split}
\end{equation}
It is plain that
\begin{equation}\label{4.20}
\mbox{$\wt{\cG}$ is $\wt{\cA}_0$-measurable, (and hence independent of $\wt{Z}_i$, $i \ge 1$)}\,.
\end{equation}

\n
Thanks to Theorem \ref{theo3.1} and Lemma \ref{lem4.1}, $\wt{\cG}$ has overwhelming probability:
\begin{equation}\label{4.21}
\wt{\IP} [ \wt{\cG}^c] \le c\,e^{-c\,d^{\ve/2}} + d^{-7}/100 + c\,e^{-c\,d}\,.
\end{equation}
We then introduce the random subset of bad points of $C$:
\begin{align}
\cM(\wt{\o})  = &\; \{x \in C; \; \mbox{no neighbor of $x$ lies in a substantial component of $\wt{\cV}_0(\wt{\o})\}$}\; \cup\label{4.22}
\\
& \;\{ x \in C; \;\mbox{more than $\alpha \log d$ trajectories in ${\rm Supp} \,\wt{\mu}_0(\wt{\o})$ enter $x\}$}\,. \nonumber
\end{align}

\n
From this definition and with (\ref{4.19}), we find that
\begin{equation}\label{4.23}
\mbox{$\cM$ is $\wt{\cA}_0$-measurable, and when $d \ge c$, $|\cM| \le 2d^{-7} |C|$, on $\wt{\cG}$}\,.
\end{equation}

\n
For the sprinkling argument, it will be convenient to specify $\wt{\cV}_0$ and $\cM$. To this effect we consider the $\wt{\cA}_0$-measurable events
\begin{equation}\label{4.24}
\wt{\cG}_{V,M} = \{ \wt{\o} \in \wt{\cG}; \,\wt{\cV}_0 = V, \;\cM = M\} \;\mbox{for $V, M \subseteq C$, such that $\wt{\cG}_{V,M} \not= \emptyset$},
\end{equation}

\n
(note that necessarily $|M| \le 2d^{-7} |C|$).

\medskip
Given $V,M$ as above, a connected component of $V$ containing at least $L$ points will be called {\it substantial}, (this follows the terminology introduced above (\ref{3.6})). We denote with $\cC_1^V, \dots, \cC^V_{\ell_V}$ the substantial components of $V$. Due to the fact that on $\wt{\cG}_{V,M}$ any vertex in $C \backslash M$ is a neighbor of a substantial component, we see that for $d \ge c$, with $c$ as in (\ref{4.23}),
\begin{equation}\label{4.25}
|\cC_1^V \cup \dots \cup \cC^V_{\ell_V} | \ge d^{-1} |C \backslash M | \stackrel{(\ref{4.23})}{\ge} d^{-1} ( 1-2d^{-7}) \,|C|\,,
\end{equation}
and moreover one has:
\begin{equation}\label{4.26}
\ell_V \,L \le |C| ( = 2^d)\,.
\end{equation}

\n
Given $V,M$ as in (\ref{4.24}), we consider a partition of $\{1,\dots, \ell_V\}$ into two subsets $I_A$ and $I_B$, where with a slight abuse of notation, $A$ and $B$ respectively stand for the union of the $\cC^V_i$, where $i$ respectively runs over $I_A$ and $I_B$. We assume $A$ and $B$ sizeable in the sense that
\begin{equation}\label{4.27}
|A| \wedge |B| \ge d^{-4} |C|\,.
\end{equation}

\n
In the notation of (\ref{1.2}), it follows from the definition of $A,B$ that
\begin{equation}\label{4.28}
A \cap \ov{B}\,^C = \emptyset = \ov{A}\,^C \cap B\,.
\end{equation}

\n
Moreover with the observation leading to (\ref{4.25}) and with (\ref{4.23}), we have
\begin{equation}\label{4.29}
\ov{A}\,^C \cup \ov{B}\,^C \cup M = C, \;\mbox{and} \; |M| \le 2d^{-7} |C|\,.
\end{equation}

\n
As we will now see, isoperimetry considerations for subsets of $C$ yield the existence of a wealth of disjoint paths of length at most $3$ between $A$ and $B$ above.

\begin{lemma}\label{lem4.3} $(d \ge c)$

\medskip
Consider $A,B,M$ subsets of $C$ satisfying {\rm (\ref{4.27}) - (\ref{4.29})}. Then there are $K$ pairwise disjoint sets $\cP_1,\dots , \cP_K$ in $C \backslash M$, with
\begin{equation} \label{4.30}
K \ge c\,d^{-6} |C|\,,
\end{equation}
and
\begin{align}
&\mbox{for $1 \le k \le K$, $\cP_k$ is either of the form $\cP_k = \{x_k\}$ with $x_k \in \partial_C A \cap \partial_C B$,}\label{4.31}
\\
&\mbox{or of the form $\cP_k = \{x_k,y_k\}$, with $x_k \in \partial_C A, y_k \in \partial_C \,B$ and $|x_k - y_k|_1 = 1$}\,. \nonumber
\end{align}
\end{lemma}

\begin{proof}
We write for simplicity $\ov{F}$ and $\partial F$ in place of $\ov{F}\,^C$ and $\partial_C\,F$, for $F \subseteq C$, throughout the proof of Lemma \ref{lem4.3}.

\medskip
Assume first that $|\ov{A} \cap \ov{B}| \ge d^{-6} |C|$. In this case we find that\begin{equation}\label{4.32}
|(\partial A \cap \partial B) \backslash M| \stackrel{(\ref{4.28})}{=} |\ov{A} \cap \ov{B} \backslash M | \ge (d^{-6} - 2d^{-7}) \ge \fr  \;d^{-6} |C|, \;\mbox{for} \; d \ge c\,.
\end{equation}

\n
This proves (\ref{4.30}), (\ref{4.31}), with $\cP_i$, $1 \le i \le K$, a collection of singletons.

\medskip
Assume now that $|\ov{A} \cap \ov{B}| < d^{-6} |C|$. We can assume without loss of generality that $|\ov{A}| \le |\ov{B}|$. As a result $2 |\ov{A}| \le |\ov{A}| + |\ov{B}| \le |\ov{A} \cup \ov{B}| + |\ov{A} \cap \ov{B}| \le ( 1 + d^{-6})|C|$, so that:
\begin{equation}\label{4.33}
|\ov{A}| \le \mbox{\f $\dis\frac{3}{4}$} \;|C|\,.
\end{equation}

\n
It then follows from the isoperimetric controls in Corollary 4, p.~305 of Bollobas-Leader \cite{BollLead91}, that there are at least $c\,|\ov{A}|$ distinct edges between $\ov{A}$ and $C \backslash \ov{A}$. As a result we see that
\begin{equation}\label{4.34}
|\partial \ov{A} | \ge c\, d^{-1} |\ov{A}| \stackrel{(\ref{4.27})}{\ge} c\,d^{-5} |C| \,.
\end{equation}

\medskip\n
From the bound on $|M|$ in (\ref{4.29}), we infer that for $d \ge c$,
\begin{equation}\label{4.35}
|\partial \ov{A} \backslash M| \ge (c\,d^{-5} - 2d^{-7}) |C| \ge c^\prime d^{-5} |C|\,.
\end{equation}

\n
In view of (\ref{4.29}), we find that $\partial \ov{A} \backslash M \subseteq \ov{B}$, and we can therefore select at least $c^\prime \,d^{-6} |C|$ pairwise disjoint sets $\{x,y\}$ with $x \in \ov{A}$, $y \in \ov{B} \backslash (\ov{A} \cup M)$, and $|x-y|_1 = 1$. Eliminating the pairs for which $x \in M$, leaves a collection of at least $(c^\prime\,d^{-6} - 2d^{-7}) |C| \ge c\,d^{-6} |C|$ pairwise disjoint sets $\{x,y\}$, and $x \in \ov{A} \backslash M$, $y \in \ov{B} \backslash (\ov{A} \cup M)$, and $|x-y|_1 = 1$. Note that necessarily $x \in \partial A \backslash M$, (otherwise $y$ would belong to $\ov{A}$). Dropping $y$ from the set when $y \in B$, so that $x \in (\partial A \cap \partial B) \backslash M$, we have the desired collection $\cP_k$, $1 \le k \le K$. This concludes the proof of Lemma \ref{lem4.3}.
\end{proof}

We resume the proof of Theorem \ref{theo4.2}. We assume $d \ge c$, so that (\ref{4.23}) and the above lemma hold. Given $V,M$ as in (\ref{4.24}), for any fixed $A,B$, as described below (\ref{4.26}), such that (\ref{4.27}) holds, we select a collection $\cP_1,\dots, \cP_K$ of pairwise disjoint sets of $C \backslash M$ satisfying (\ref{4.30}), (\ref{4.31}).

\medskip
On the event $\wt{\cG}_{V,M}$ of (\ref{4.24}), each $\cP_k$ meets at most $2 \alpha \log d$ trajectories in ${\rm Supp} \, \wt{\mu}_0$, and each trajectory in ${\rm Supp} \,\wt{\mu}_0$ meets at most $c_3\,d$ sets $\cP_{k^\prime}$, $1 \le k^\prime \le K$. We say that $k,k^\prime$ in $\{1,\dots ,K\}$ are related if some trajectory in ${\rm Supp} \, \wt{\mu}_0$ meets $\cP_k$ and $\cP_{k^\prime}$. It is plain that for $1 \le k, k^\prime \le K$,
\begin{align}
&\{\wt{\o} \in \wt{\cG}_{V,M}; \; k \;\mbox{and} \; k^\prime \;\mbox{are related}\} \in \wt{\cA}_0, \;\mbox{and} \label{4.36}
\\[2ex]
&\mbox{on $\wt{\cG}_{V,M}$ each $k$ is related to at most $R = 2 \alpha \,c_3\,d\,\log d$} \label{4.37}
\\
&\mbox{integers $k^\prime$ in $\{1,\dots,K\}$} \,.\nonumber
\end{align}

\n
We then construct for $\wt{\o} \in \wt{\cG}_{V,M}$ in a straightforward fashion an $\wt{\cA}_0$-measurable random subset $\cK (\wt{\o})$ of $\{1,\dots,K\}$, consisting of unrelated integers that is maximal. We pick $k_1 = 1$. If all integers in $\{1,\dots,K\} \backslash \{k_1\}$ are related to $k_1$, we stop. Otherwise, we pick $k_2$ the smallest $k$ in $\{1,\dots, K\} \backslash \{k_1\}$ unrelated to $k_1$. If all integers are related to $k_1$ or $k_2$, we stop. Otherwise we pick $k_3$, the smallest integer unrelated to $k_1$ or $k_2$, and so on, until we stop at some point. In this way for $\wt{\o} \in \wt{\cG}_{V,M}$ we construct $\cK(\wt{\o}) \subseteq \{1, \dots, K\}$, such that 
\begin{equation}\label{4.38}
\begin{array}{rl}
{\rm i)} & \mbox{$\cK(\wt{\o})$ is $\wt{\cA}_0$-measurable}\,,
\\[1ex]
{\rm ii)} & |\cK(\wt{\o})| \ge [K / R] \ge c(d^7 \,\log d)^{-1} |C|\,,
\\[1ex]
{\rm iii)} & \mbox{for $k \not= k^\prime$ in $\cK(\o)$, $k$ and $k^\prime$ are unrelated}\,.
\end{array}
\end{equation}

\n
We will now see that on $\wt{\cG}_{V,M}$, the conditional probability given $\wt{\cA}_0$, that $A$ and $B$ are in the same connected component of $\wt{\cV}_1$, is overwhelming. Indeed on $\wt{\cG}_{V,M}$ the sets of trajectories in ${\rm Supp} \, \wt{\mu}_0$, which enter the various $\cP_k$, $k \in \cK$, are pairwise disjoint. If some $k \in \cK$ is such that all Bernoulli variables $\wt{Z}^i$, with $i \ge 1$, varying over the set of indexes of the trajectories $\wt{X}_\point^i$ meeting $\cP_k$, equal zero, then $\cP_k \subseteq \wt{\cV}_1$, and in view of (\ref{4.31}), $A$ and $B$ belong to the same connected component of $\wt{\cV}_1$. With (\ref{4.10}), (\ref{4.14}), we see that on $\wt{\cG}_{V,M}$, for $A,B$ as described below (\ref{4.26}), $\wt{\IP}$-a.s.,
\begin{equation}\label{4.39}
\begin{array}{l}
\mbox{$\wt{\IP}[A,B$ are not connected in $\wt{\cV}_1 | \wt{\cA}_0] \le \Big(1 - \Big(\mbox{\f $\dis\frac{u_0 - u_1}{u_0}$}\Big)^{2 \alpha \log d}\Big)^{[K/R]} \stackrel{(\ref{4.1}),(\ref{4.38})\,{\rm ii)}}{\le}$}
\\
\exp\{-c\,\ve^{2 \alpha \log d}(d^7 \log d)^{-1} |C|\}\,.
\end{array}
\end{equation}

\n
The number of possible choices for $A,B$ is at most $2^{\ell_V}$. Therefore when $d \ge c$, for $V,M$ satisfying (\ref{4.24}), we see that $\wt{\IP}$-a.s. on $\wt{\cG}_{V,M}$,
\begin{equation}\label{4.40}
\begin{array}{l}
\wt{\IP}\Big[\dis\bigcup\limits_{A,B} \{A,B \;\mbox{are not connected in} \;\wt{\cV}_1\}|  \wt{\cA}_0\Big]  \stackrel{(\ref{3.5}),(\ref{4.26}),(\ref{4.39})}{\le}
\\
\exp\{c\,d^{-\ve(\ell -1)} |C| - c\,d^{-(7 + 2 \alpha \log \frac{1}{\ve})} (\log d)^{-1} |C|\}
\stackrel{(\ref{3.5})}{\le} d^{-7}/100\,,
\end{array}
\end{equation}

\n
where $A,B$ in (\ref{4.40}) run over aggregate unions of substantial components of $V$ corresponding to a partition of $\{1, \dots , \ell_V\}$ into two classes, such that $|A|$ and $|B|$ are at least $d^{-4} |C|$, cf.~below (\ref{4.26}).

\medskip
On the event $\wt{\cH}_{V,M}$, complement in $\wt{\cG}_{V,M}$ of the event that appears in (\ref{4.40}), we can consider the random partition of $\{1,\dots ,\ell_V\}$ induced by the various $\cC^V_i$, $1 \le i \le \ell_V$, which belong to the same connected component of $\wt{\cV}_1$. Denote with $I$ an element of this partition such that $|\bigcup_{i \in I} \cC^V_i |$ is maximal. When $d \ge c$, with (\ref{4.25}), and the definition of $\wt{\cH}_{V,M}$, we see that necessarily $|\bigcup_{i \in I} \,\cC^V_i | \ge d^{-4} |C|$, and therefore $|\bigcup_{i \in I^c} \cC_i^V | < d^{-4} |C|$. As a result we find that $|\overline{\bigcup_{i \in I^c} \cC_i^V} \cap C| \le 2d^{-3} |C|$, and hence with the argument above (\ref{4.25}) we have
\begin{equation}\label{4.41}
\begin{split}
\Big|\overline{\bigcup\limits_{i \in I} \,\cC_i^V} \cap C\Big| \ge |C| - \Big| \overline{\bigcup\limits_{i \in I^c} \cC^V_i} \cap C | - |M| & \ge (1-2d^{-3} - 2d^{-7}) |C|
\\[-1ex]
& \ge (1-d^{-2}) |C| \,.
\end{split}
\end{equation}

\n
Since the $\wt{\cA}_0$-measurable events $\wt{\cG}_{V,M}$ partition $\wt{\cG}$, cf.~(\ref{4.24}), we thus find that when $d \ge c$, the conditional probability given $\wt{\cG}$, that $\wt{\cV}_1$ has an ubiquitous component, is at least $1 - d^{-7}/100$. Together with (\ref{4.21}), we readily find (\ref{4.16}). This concludes the proof of Theorem \ref{theo4.2}.
\end{proof}

We now turn to the second sprinkling operation, which is simpler. Our intent is to link together the various ubiquitous components of $\cV^{u_1} \cap C_y$, for $|y|_1 \le 1$ in $\IZ^2$. This will enable us to verify condition (\ref{2.4}) of Theorem \ref{theo2.2} with the choice $u(d) = u_2$, see (\ref{4.1}), and Theorem \ref{theo0.1} will follow. With this in mind, we consider an auxiliary probability space $(\wh{\Omega}, \wh{\cA}, \wh{\IP})$ endowed with a collection of independent variables $\wh{N}, \wh{X}^i_\point, i \ge 1, \wh{Z}^i, i \ge 1$, as in (\ref{4.8}) - (\ref{4.10}), simply replacing $C$ with $\wh{C}$, see (\ref{4.2}), and $\frac{u_1}{u_0}$ with $\frac{u_2}{u_1}$, see (\ref{4.1}). We then introduce the sub $\sigma$-algebra of $\wh{\cA}$:
\begin{equation}\label{4.42}
\wh{\cA}_1 = \sigma (\wh{N}, \wh{X}_\point^i, i \ge 1)\,,
\end{equation}
as well as the point processes on $W_+$:
\begin{equation}\label{4.43}
\wh{\mu}_1(dw) = \dsl_{1 \le i \le \wh{N}} \delta_{\wh{X}^i}(dw), \;\;\wh{\mu}_2(dw) = \dsl_{1 \le i \le \wh{N}} \wh{Z}^i \,\delta_{\wh{X}^i_\point} (dw)\,.
\end{equation}

\medskip\n
Just as in (\ref{4.13}) - (\ref{4.15}), we find that:
\begin{align}
&\mbox{$\wh{\mu}_1$ is $\wh{\cA}_1$-measurable, with the same distribution as $\mu_{\wh{C}, u_1}$ under $\IP$}, \label{4.44}
\\ 
&\mbox{$\wh{Z}^i$, $i \ge 1$, are independent of $\wh{\cA}_1$},\label{4.45}
\\ 
&\mbox{$\wh{\mu}_2$, has the same distribution as $\mu_{\wh{C},u_2}$ under $\IP$}.\label{4.46}
\end{align}

\medskip\n
We recall the notation from (\ref{2.3}) and (\ref{4.1}). The next objective is the following 

\begin{theorem}\label{theo4.4} $(d \ge c)$
\begin{equation}\label{4.47}
\IP[\cG_{u_2}] \ge 1-d^{-7}\,.
\end{equation}
\end{theorem}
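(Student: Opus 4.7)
The plan is to mirror the strategy of Theorem \ref{theo4.2} in the auxiliary space $(\wh\Omega,\wh\cA,\wh\IP)$, using the coupling $\wh\cV_1\subseteq\wh\cV_2$ from (\ref{4.43}). Since sprinkling can only enlarge connected components, any ubiquitous component of $\wh\cV_1\cap C_{y'}$ is contained in the (unique, by Lemma \ref{lem2.1}) ubiquitous component of $\wh\cV_2\cap C_{y'}$. Using (\ref{4.44})--(\ref{4.46}), it therefore suffices to produce, up to an event of $\wh\IP$-probability at most $d^{-7}$, ubiquitous components $\cC_{y'}$ of $\wh\cV_1\cap C_{y'}$ for each $y'\in\IZ^2$ with $|y'|_1\le 1$, and to connect each side component $\cC_{y'}$ ($|y'|_1=1$) to $\cC_0$ in $\wh\cV_2\cap\wh C$. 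The existence of the five $\cC_{y'}$ follows from Theorem \ref{theo4.2} via translation invariance and a union bound, at cost $5\cdot d^{-7}/10=d^{-7}/2$. I would then define an $\wh\cA_1$-measurable good event $\wh\cG$ on which, in addition, the analogues of Lemma \ref{lem4.1} at level $u_1<u_0$ hold: at most $d^{-7}|C|$ points of $\wh C$ lie on more than $\alpha\log d$ trajectories of $\mbox{Supp}\,\wh\mu_1$, and every such trajectory spends at most $c_3 d$ steps in $\wh C$. The same proofs as in Lemma \ref{lem4.1} apply (they use only that $u_1/g(0)=(1-2\ve)\log d$ upper-bounds the relevant Poisson intensities), yielding $\wh\IP[\wh\cG^c]\le d^{-7}/2+d^{-7}/100+ce^{-cd}$.

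For the connection step, fix $y'=e_1\in\IZ^2$ (the other three side directions are identical by symmetry); then $C_{y'}=\{2,3\}\times\{0,1\}^{d-1}$, and the touching faces $\{1\}\times\{0,1\}^{d-1}\subset C_0$ and $\{2\}\times\{0,1\}^{d-1}\subset C_{y'}$ are joined by the $2^{d-1}$ candidate edges $\{(1,x),(2,x)\}$, $x\in\{0,1\}^{d-1}$. Call $x$ \emph{good} if $(1,x)\in\ov{\cC}_0\cap C_0$ and $(2,x)\in\ov{\cC}_{y'}\cap C_{y'}$; the estimate $|\ov{\cC}_{y'}\cap C_{y'}|\ge(1-d^{-2})|C_{y'}|$ applied to each of the two cubes forces the number of good pairs to be at least $2^{d-1}(1-4/d^{2})$. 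Whenever both endpoints of a good pair belong to $\wh\cV_2$, the edge between them, together with the neighbors in $\cC_0$ and $\cC_{y'}$ supplied by the closure relation, gives a path in $\wh\cV_2\cap\wh C$ from $\cC_0$ to $\cC_{y'}$. On $\wh\cG$, after discarding the at most $O(d^{-7}|C|)$ good pairs with a heavy endpoint (one visited by more than $\alpha\log d$ trajectories of $\mbox{Supp}\,\wh\mu_1$), at least $2^{d-2}$ good pairs $x$ satisfy $|T_x|\le 2\alpha\log d$, where $T_x$ indexes the trajectories of $\mbox{Supp}\,\wh\mu_1$ visiting $\{(1,x),(2,x)\}$. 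Since candidate pairs are pairwise disjoint and each trajectory visits at most $c_3 d$ of them, every good non-heavy pair is related (shares a trajectory) to at most $c\,d\log d$ others; a greedy selection as in (\ref{4.36})--(\ref{4.38}) extracts an $\wh\cA_1$-measurable family $\cK$ of pairwise unrelated such pairs with $|\cK|\ge c\,2^d/(d\log d)$.

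Conditioning on $\wh\cA_1$ and using (\ref{4.45}), the events $\{\wh Z^i=0\text{ for all }i\in T_x\}$ for $x\in\cK$ are independent, each of probability $(1-u_2/u_1)^{|T_x|}\ge(\ve/(1-2\ve))^{2\alpha\log d}=d^{-c(\ve)}$. The $\wh\cA_1$-conditional probability that no pair of $\cK$ is fully thinned is therefore at most $(1-d^{-c(\ve)})^{|\cK|}\le\exp(-c\,d^{-c(\ve)}\cdot 2^d/(d\log d))$, which is doubly exponentially small in $d$. A union bound over the four side directions together with $\wh\IP[\wh\cG^c]\le d^{-7}/2+d^{-7}/100+ce^{-cd}$ yields $\wh\IP[\cG_{u_2}^c]\le d^{-7}$ for $d$ large, proving (\ref{4.47}). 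The main obstacle is the bookkeeping needed to extract an unrelated family $\cK$ of good pairs in the presence of the long-range coupling via shared trajectories; this step is genuinely easier than in Theorem \ref{theo4.2} because the candidate paths here are single edges and we need only merge four pairs of components, rather than an arbitrary bipartition of substantial components.
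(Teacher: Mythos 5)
Your proposal is correct and follows essentially the same route as the paper: five applications of Theorem \ref{theo4.2} for the ubiquitous components of $\wh{\cV}_1\cap C_{y'}$, a count of order $2^d$ disjoint connecting pairs across each touching face, and a second thinning argument conditioned on $\wh{\cA}_1$ using the trajectory-count and occupation-time bounds of Lemma \ref{lem4.1} (which the paper invokes implicitly via ``a similar sprinkling argument'' and which you rightly note carry over to level $u_1$). The only differences are minor bookkeeping in the final union bound.
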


\begin{proof}
On the above auxiliary probability space, we define random subsets of $\wh{C}$ via:
\begin{equation}\label{4.48}
\wh{\cV}_i = \wh{C}\; \backslash \bigcup\limits_{w \in {\rm Supp} \,\wh{\mu}_i} w(\IN), \;\mbox{for $i = 1,2$}\,.
\end{equation}

\n
In view of (\ref{4.44}), (\ref{4.46}), we find that for $i = 1,2$,
\begin{equation}\label{4.49}
\mbox{$\wh{\cV}_i$ under $\wh{\IP}$ has the same distribution as $\cV^{u_i} \cap \wh{C}$ under $\IP$}\,.
\end{equation}

\n
For $y$ in $\IZ^2$ with $|\cdot |_1$-norm at most $1$, we denote with $\wh{\cG}_y$ the event
\begin{equation}\label{4.50}
\mbox{$\wh{\cG}_y = \{\wh{\o} \in \wh{\Omega}$; there is a component $\cC_y$ of $\wh{\cV}_1 \cap C_y$ ubiquitous in $C_y\}$},
\end{equation}

\medskip\n
(from Lemma \ref{lem2.1}, we know that when $d \ge c$, such a component is necessarily unique). With Theorem \ref{theo4.2} we find that for $d \ge c$,
\begin{equation}\label{4.51}
\wh{\IP}[\wh{\cG}] \ge 1 - d^{-7}/2, \;\mbox{where} \;\wh{\cG} \, \stackrel{\rm def}{=} \bigcap\limits_{y \in \IZ^2, |y|_1 \le 1} \wh{\cG}_y\,.
\end{equation}

\n
For each $|y|_1 = 1$, $y$ in $\IZ^2$, we denote with $F_y = C_y \cap \partial C$, the ``face'' of $C_y$ abutting $C$, and with $F_{y,0} = C \cap \partial C_y ( = F_y - y)$, the ``face'' of $C$ abutting $C_y$. On the event $\wh{\cG}$, for each such $y$, $\ov{\cC}_y$ covers at least $|F_y| - d^{-2} |C| = (1-2d^{-2}) |F_y| \ge \frac{3}{4} \,|F_y|$ sites of $F_y$ and $\ov{\cC}$ covers at least $\frac{3}{4} \,|F_{y,0}|$ sites of $|F_{y,0}|$, (with the notation $\cC = \cC_{y=0}$). We thus find that on $\wh{\cG}$, for each $|y|_1 = 1$, in $\IZ^d$,
\begin{equation}\label{4.52}
\begin{array}{l}
\mbox{there are $\frac{1}{2} |F_y| = \frac{1}{4} \,|C|$ disjoint sets $\{z_k,z_{k,0}\}$, with}
\\[1ex]
\mbox{$z_k \in \ov{\cC}_y \cap F_y$, $z_{k,0} \in \ov{\cC} \cap F_{y,0}$, and $|z_k - z_{k,0}|_1 = 1$}\,.
\end{array}
\end{equation}

\n
A similar sprinkling argument as for the proof of Theorem \ref{theo4.2} now yields, (compare with (\ref{4.39})), that when $d \ge c$, $\wh{\IP}$-a.s. on $\wh{\cG}$,
\begin{equation}\label{4.53}
\begin{array}{l}
\mbox{$\wh{\IP}[\cC_y$ and $\cC$ are in distinct components of $\wh{\cV}_2 | \wh{\cA}_1] \le$}
\\
\Big(1 - \Big(\dis\frac{u_1 - u_2}{u_1}\Big)^{2 \alpha \log d}\Big)^{[|C|/4R]} \le \exp\{- c\, \ve^{2 \alpha \log d} (d \log d)^{-1} |C|\} \le d^{-7}/8\,.
\end{array}
\end{equation}

\n
Combining this estimate with (\ref{4.51}), we find that for $d \ge c$,
\begin{equation}\label{4.54}
\IP [\cG_{u_2}] \ge 1 - 4\,d^{-7}/8 - d^{-7}/2 = 1-d^7\,,
\end{equation}
and Theorem \ref{theo4.4} is proved.
\end{proof}

We can now complete the proof of Theorem \ref{theo0.1}.

\bigskip\n
{\it Proof of Theorem \ref{theo0.1}}: It follows from Theorem \ref{theo4.4} and Theorem \ref{theo2.2}, that for any $0 < \ve < \frac{1}{10}$, $u_* \ge (1-3 \ve) g(0) \log d$, except maybe for finitely many values of $d$. This readily implies that $\liminf_d u_*/\log d \ge 1$, and thus completes the proof of Theorem \ref{theo0.1}. \hfill $\square$

\begin{remark}\label{rem4.5} \rm
It is natural to wonder whether Theorem \ref{theo0.1} captures the correct growth of $u_*$ for large $d$, in the sense that
\begin{equation}\label{4.55}
\mbox{$u_* \sim \log d$, when $d \rightarrow \infty$}\,.
\end{equation}

\n
This is indeed the case and we refer to \cite{Szni10a}, where the asymptotic upper bound on $u_*$ complementing (\ref{0.3}) can be found. In fact one may wonder whether a finer result actually holds, namely:
\begin{equation}\label{4.56}
\IP [0 \in \cV^{u_*}] = e^{-u_*/g(0)} \sim \mbox{\f $\dis\frac{1}{2d}$, as $d \r \infty$}\,.
\end{equation}

\n
This would signal a similar behavior as for Bernoulli percolation, cf.~\cite{AlonBenjStac04}, \cite{BollKoha94}, \cite{Gord91}, \cite{HaraSlad90}, \cite{Kest90}. In the case of the percolation of the vacant set left by random interlacements on $2d$-regular trees, the probability corresponding to the left-hand side of (\ref{4.56}) can be explicitly computed, and a relation such as (\ref{4.56}) is known to hold, cf.~Remark 5.3 of Teixeira \cite{Teix08b}. It is of course an interesting question to understand, whether for large $d$ random interlacements on $\IZ^d$ exhibit similarities to random interlacements on $2d$-regular trees, and if this is the case, explore the nature of these similarities. \hfill $\square$
\end{remark}

\end{document}